\documentclass[11pt,reqno]{amsart}

\usepackage[T1]{fontenc}
\usepackage{lmodern}
\usepackage{microtype}
\usepackage{amsmath,amssymb,amsthm,mathtools,mathrsfs}
\usepackage[a4paper,margin=28mm,headheight=14pt]{geometry}
\usepackage{enumitem}
\usepackage[
  colorlinks=true,
  linkcolor=red,
  citecolor=blue,
  urlcolor=blue
]{hyperref}

\allowdisplaybreaks[2]
\numberwithin{equation}{section}
\setlist[enumerate]{label=\textup{(\roman*)},leftmargin=*,itemsep=3pt}

\newtheorem{theorem}{Theorem}[section]
\newtheorem{proposition}[theorem]{Proposition}
\newtheorem{lemma}[theorem]{Lemma}
\newtheorem{corollary}[theorem]{Corollary}

\theoremstyle{definition}

\theoremstyle{remark}
\newtheorem{remark}[theorem]{Remark}

\newcommand{\tr}{\operatorname{tr}}

\newcommand{\op}{\mathrm{op}}
\newcommand{\C}{\mathbb C}

\title{Pointwise energy monotonicity for stable Higgs bundles}
\author[Tianzhi Hu]{Tianzhi Hu}
\address{
School of Mathematics and Statistics, Wuhan University, Wuhan, Hubei 430072, P.R. China
}
\email{hutianzhi@whu.edu.cn}
\date{}

\begin{document}

\begin{abstract}
We study the Dai--Li conjecture concerning the pointwise monotonicity of the energy density along the $\mathbb C^*$-flow of stable $\mathrm{SL}(n,\mathbb C)$ Higgs bundles. We prove that the conjecture holds in rank two: for every stable $\mathrm{SL}(2,\mathbb C)$ Higgs bundle, the energy density is pointwise nondecreasing along the $\mathbb C^*$-orbit. In contrast, we show that this phenomenon is genuinely rank-dependent. For every rank $n\geq 3$, we construct stable $\mathrm{SL}(n,\mathbb C)$ Higgs bundles for which the energy density fails to be monotone along the $\mathbb C^*$-flow.
\end{abstract}

\maketitle

\section{Introduction}\label{sec:introduction}

Let $X$ be a compact Riemann surface of genus $g\geq 2$.  Non-abelian Hodge theory
\cite{UhlenbeckYau1986,Hitchin1987,Donaldson1987,Corlette1988,Simpson1992} relates
representations of $\pi_1(X)$, equivariant harmonic maps, and Higgs bundles on
$X$.  In particular, a stable degree-zero Higgs bundle $(E,\Phi)$ admits a
harmonic metric $h$, unique up to a positive scalar. The harmonic metric induces a flat bundle structure and an equivariant harmonic map
\[
 f:\widetilde X\longrightarrow G/K
\]
to the corresponding symmetric space.

Fix a conformal metric $g_X$ on $X$ and an invariant metric $g_{G/K}$ on the
symmetric space.  We write the \textbf{pointwise energy density} and the \textbf{total energy} as
\begin{equation}\label{eq:intro-energy-definitions}
 e_{(E,\Phi)}(p)
 :=\operatorname{tr}_{g_X}\bigl(f^*g_{G/K}\bigr)(p),
 \qquad
 \mathcal E(E,\Phi)
 :=\int_X e_{(E,\Phi)}\,\mathrm{vol}_{g_X}.
\end{equation}

The moduli space of Higgs bundles carries the natural $\C^*$-action
\[
 (E,\Phi)\longmapsto(E,\lambda\Phi),\qquad \lambda\in\C^*.
\]
Hitchin's Morse-theoretic analysis of this action shows that the total energy
is nondecreasing as $|\lambda|$ increases and is strictly monotone along a nonconstant orbit
\cite{Hitchin1987,Hitchin1992}.  

Motivated by this, Dai and Li proposed the \textbf{pointwise energy monotonicity conjecture}; see \cite{DaiLiCyclic} and \cite{LiHarmonic}: along the $\C^*$-orbit of a (stable) Higgs bundle,
the energy density should increase pointwise with $|\lambda|$.
Dai and Li proved this monotonicity for stable cyclic Higgs bundles
\cite{DaiLiCyclic}.  A related
pointwise domination result for Higgs bundles in the Hitchin section had been
obtained by Li \cite{LiHarmonic}.  Further pointwise domination results within
$n$-Fuchsian Hitchin fibers were established by Dai and Li
\cite{DaiLiDomination}.  Related domination results for surface group
representations were established by Deroin and Tholozan \cite{DeroinTholozan}.
More recently, Sagman--To\v{s}i\'c proved the Dai--Li monotonicity conjecture for
stable simple Coxeter cyclic $G$ Higgs bundles for every simple complex group
$G$ \cite{SagmanTosic}. 

For the monotonicity problem it is enough to consider the positive real part of
the $\C^*$-action.  Indeed, if we write $\lambda=te^{i\theta}$ with $t>0$, the
harmonic metric and the energy density depend only on $t=|\lambda|$.  We shall
therefore write
\[
 (E,\Phi_t):=(E,t\Phi),\qquad t>0,
\]
and denote the corresponding harmonic metric and energy density by $h_t$ and
$e_t$.

Our main result settles the Dai--Li pointwise energy monotonicity conjecture completely in rank two.  

\begin{theorem}\label{thm:mainSL2}
Let $(E,\bar\partial_E,\Phi)$ be a stable degree-zero $\mathrm{SL}(2,\mathbb C)$ Higgs bundle over $X$. For every
$t>0$, let $h_t$ be the harmonic metric of $(E,t\Phi)$ inducing fixed
determinant metrics, and let $e_t$ be the energy density of $h_t$. Then:
\begin{enumerate}
\item[\textup{(i)}] \emph{(Non-strict part.)} For every $x\in X$ and $t>0$, we have $\partial_t e_t(x)\ge0.$
\item[\textup{(ii)}] \emph{(Rigidity of the non-strict case.)} If
$\Phi(x)\ne0$ and $\partial_t e_t(x)=0$ for some $x\in X$ and some $t>0$, then $(E,\Phi)$ is a $\C^*$-fixed Higgs
bundle. 
\end{enumerate}
\end{theorem}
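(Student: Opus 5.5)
We will work with the standard formula for the energy density. Up to a fixed normalising constant depending on $g_{G/K}$, it reads
\[
 e_t=2t^2|\Phi|^2_{h_t}=2t^2\,\tr\bigl(\Phi\Phi^{*_{h_t}}\bigr)/g_X .
\]
Hitchin's equation for the pair $(\bar\partial_E,t\Phi)$ is
\[
 F_{h_t}+t^2[\Phi,\Phi^{*_{h_t}}]=0 .
\]
Write $\dot h_t=h_t H$, where $H=h_t^{-1}\partial_t h_t$ is $h_t$-self-adjoint and traceless, because the determinant metric is fixed. Differentiating the equation in $t$ gives the linearised equation
\[
 \bar\partial\partial_{h_t}H+t^2\bigl[\Phi,[\Phi^{*},H]\bigr]=-2t\,[\Phi,\Phi^*],
\]
with the appropriate sign conventions. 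At the same time
\[
 \partial_t e_t=\tfrac{2}{t}e_t-2t^2\,\tr\bigl(\Phi[H,\Phi^*]\bigr).
\]
The basic observation is that $u:=t\,\partial_t e_t$ should satisfy an elliptic inequality of the form $\Delta u\le c\,u+(\text{nonnegative})$ with $c\ge 0$. The minimum principle would then force $u\ge 0$. Rank two enters through the algebra of $\mathfrak{sl}(2)$: $H$ and $\Phi$ can be written in a local $h_t$-unitary frame, and every commutator identity reduces to scalar quantities.

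\textbf{Step 1: local scalar reduction away from the zeros of $q=\det\Phi$.} I plan to use the classical rank-two description. Where $\Phi\neq0$, the quantities $|\Phi|^2_{h_t}$ and $|q|$ are related by $|\Phi|^2\ge 2|q|$, and the $\mathfrak{sl}(2)$ part is controlled by the function
\[
 w=\log\bigl(|\Phi|^2+\sqrt{|\Phi|^4-4|q|^2}\bigr),
\]
or by an equivalent quantity. The Bochner-type formula for $\log|\Phi|^2_{h_t}$ obtained from Hitchin's equation is
\[
 \Delta\log|\Phi|^2\ge 2t^2\,\frac{|[\Phi,\Phi^*]|^2}{|\Phi|^2}.
\]
In rank two, $|[\Phi,\Phi^*]|^2=2\bigl(|\Phi|^4-4|q|^2\bigr)$. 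This gives a scalar PDE
\[
 \Delta v = F_t(v,|q|),\qquad v=\log e_t ,
\]
valid off the zeros of $\Phi$, where $F_t$ is an explicit function increasing in $v$. The equality case in rank two yields exactly this, with no error term.

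\textbf{Step 2: differentiate in $t$ and apply the maximum principle.} Differentiating the scalar equation gives
\[
 \Delta\dot v=\partial_vF\cdot\dot v+\partial_tF,
\]
with $\partial_vF\ge0$. I expect $\partial_tF\le0$ after checking the sign, using the dependence of $F_t$ on $t^2|q|$. The function $\dot v$ is smooth on all of $X$ where $\Phi\ne0$. At zeros of $\Phi$ we have $e_t\sim t^2|z|^{2k}$, so $\dot v=2/t+\text{smooth}$ there, and the singularity is removable. By compactness, $\dot v$ attains its minimum at some point, and there $\Delta\dot v\ge0$. Since $\partial_vF>0$ wherever $|\Phi|^4>4|q|^2$, this yields $\dot v\ge0$, which is part (i). The degenerate points where $|\Phi|^2=2|q|$ need separate treatment. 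At such points the commutator vanishes and $\partial_vF$ may vanish, so I would handle them by a perturbation, replacing $\dot v$ by $\dot v+\epsilon\psi$.

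\textbf{Step 3: rigidity.} If $\dot v(x)=0$ at a point with $\Phi(x)\ne0$, then $x$ is an interior minimum of the nonnegative function $\dot v$. The strong maximum principle applied to $\Delta\dot v-c\dot v\le0$ then gives $\dot v\equiv0$ on $X$ minus the zeros of $\Phi$, and hence $\partial_te_t\equiv0$. Integrating over $X$, the total energy is then constant in $t$. By Hitchin's strict monotonicity of $\mathcal E$ along nonconstant orbits, the orbit is constant, so $(E,\Phi)$ is $\C^*$-fixed.

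The main obstacle is Step 1–2: obtaining a genuinely closed scalar equation with the correct signs of $\partial_vF$ and $\partial_tF$. If the clean reduction fails, the fallback is to work directly with the pair $(|\Phi|^2,H)$. In that case I would use the fact that in rank two $H$ decomposes along $[\Phi,\Phi^*]$ and its orthogonal complement, and run the maximum principle on $\tr(\Phi[H,\Phi^*])$ instead.
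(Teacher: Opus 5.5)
\textbf{There is a genuine gap, at Step 1, and Steps 2--3 depend on it.}

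\emph{Why Step 1 fails.} For a general stable $\mathrm{SL}(2,\C)$ Higgs bundle, $v=\log e_t$ does not satisfy a closed equation $\Delta v=F_t(v,|q|)$. Besides the curvature term, the Bochner formula for $\log|\Phi|^2_{h_t}$ contains the Kato term
\[
\frac{|D'_{h_t}\Phi|^2|\Phi|^2-|\langle D'_{h_t}\Phi,\Phi\rangle|^2}{|\Phi|^4}\ \ge 0 .
\]
This term vanishes only where $D'_{h_t}\Phi$ is pointwise proportional to $\Phi$. Rank two does not force that, and the term is not a function of $(e_t,|q|)$.

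Geometrically, the harmonic map lands in $\mathbb{H}^3$, which has more degrees of freedom than $(e_t,q)$ record. For nilpotent $\Phi$ ($q\equiv0$) the map is a branched minimal surface. The Gauss equation then expresses $\Delta\log e_t$ through $e_t$ and the squared norm of its second fundamental form, which is independent data. Even in the Hitchin section, the closed scalar equation is for a metric component such as $\log|\partial f|^2$, not for $\log e_t$. Writing $e_t$ in terms of that component produces extra first-order terms.

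\emph{Consequences for Steps 2 and 3.} You only have an inequality, and an inequality cannot be differentiated in $t$. So the minimum principle for $\dot v$ in Step 2 has no equation to act on. The signs of $\partial_vF$ and $\partial_tF$, and the points with $|\Phi|^2=2|q|$, were also left unchecked. Step 3 inherits the problem: without an elliptic inequality satisfied by $\partial_te_t$, a single zero of $\partial_te_t$ does not propagate. Even granting propagation, you would get $\frac{d}{dt}\mathcal E=0$ at one value of $t$, not constancy of $\mathcal E$. You would then need the infinitesimal form of Hitchin's moment-map argument.

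\emph{The missing idea.} Apply the maximum principle to the metric variation, not to the energy. Set $\Gamma_t:=tH=h_t^{-1}\partial_sh_t$ with $s=\log t$; in rank two its eigenvalues are $\pm\gamma_t$. Work in an $h_t$-unitary eigenframe of $\Gamma_t$ and write $t\Phi=A_t\,dz$ with $A_t=\begin{pmatrix}a&b\\c&-a\end{pmatrix}$. Your own formula for $\partial_te_t$ then becomes
\[
\partial_s\tr\bigl(A_tA_t^{\star_{h_t}}\bigr)=4|a|^2+2(1+\gamma_t)|b|^2+2(1-\gamma_t)|c|^2 ,
\]
so (i) reduces to $\gamma_t\le1$.

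It is $\gamma_t$, not $\log e_t$, that satisfies a clean scalar inequality. Apply $\partial_{\bar z}\partial_z$ to $\tr\Gamma_t^2=2\gamma_t^2$ and insert the linearized Hitchin equation. This gives
\[
\partial_{\bar z}\partial_z\gamma_t\ge 2(1+\gamma_t)|b|^2+2(\gamma_t-1)|c|^2\qquad\text{on }\{\gamma_t>0\}.
\]
Hence $\gamma_t$ is subharmonic on $\{\gamma_t>1\}$. The maximum principle, together with stability, then forces $\gamma_t\le1$: a constant $\gamma_t>1$ would give a $\Phi$-invariant parallel splitting into degree-zero line bundles.

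For (ii), suppose $\partial_te_t(x)=0$ with $\Phi(x)\ne0$. Then $a=b=0$ and $c\ne0$ at $x$, so $\gamma_t(x)=1$. The strong maximum principle for $\gamma_t-1\le0$ propagates this to $\gamma_t\equiv1$. The resulting parallel splitting $E=L_+\oplus L_-$ puts $\Phi$ in strictly lower-triangular form, so $(E,\Phi)$ is $\C^*$-fixed.

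Your fallback of running the maximum principle on $\tr(\Phi[H,\Phi^*])$ is close, but the quantity that actually closes up is the top eigenvalue of $H$.
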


The proof is reduced to a rank-two estimate for the logarithmic variation of
the harmonic metric.  Define the smooth endomorphism
\[
 \Gamma_t:=t\cdot h_t^{-1}\partial_t h_t.
\]
Because the determinant metric is fixed, $\Gamma_t$ is trace-free; it is also
Hermitian self-adjoint with respect to $h_t$.  The pointwise energy variation
formula obtained in Section~\ref{sec:linearization} shows that the non-strict part of
Theorem~\ref{thm:mainSL2} follows once one proves the following bound.  The
equality analysis of the same estimate yields the rigidity statement.

\begin{proposition}[Metric-variation estimate]\label{prop:Gammabound}
For every $t>0$,
\[
 \|\Gamma_t(x)\|_{\op}\le 1
 \qquad\text{for every }x\in X,
\]
where the operator norm is computed with respect to $h_t$.
\end{proposition}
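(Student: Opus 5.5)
The plan is a maximum-principle argument for the linearized Hitchin equation satisfied by $\Gamma_t$, using that in rank two a trace-free self-adjoint endomorphism is determined by one eigenvalue.

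\textbf{Step 1: the linear equation for $\Gamma_t$.} Write the Hitchin equation for $h_t$ as $F_{h_t}+t^2[\Phi,\Phi^{*h_t}]=0$, where $F_h=\bar\partial(h^{-1}\partial h)$. Differentiating in $t$ uses two standard variation formulas:
\[
\partial_t F_{h}=\bar\partial\partial_{h}(h^{-1}\partial_t h),
\qquad
\partial_t \Phi^{*h}=[\Phi^{*h},h^{-1}\partial_t h].
\]
Multiplying by $t$ then gives an elliptic equation of the form
\[
\bar\partial\partial_{h_t}\Gamma_t+t^2\bigl[\Phi,[\Phi^{*h_t},\Gamma_t]\bigr]=-2t^2[\Phi,\Phi^{*h_t}].
\]
I will fix the signs and conventions carefully. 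A useful check is the $\mathbb C^*$-fixed case $E=L\oplus L^{-1}$ with $\Phi:L\to L^{-1}$. There $h_t$ is obtained from $h_1$ by rescaling the two summands by powers of $t$, and $\Gamma_t=\mathrm{diag}(1,-1)$ up to ordering. This saturates the bound and fixes the normalization.

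\textbf{Step 2: Bochner formula for $u=|\Gamma_t|^2_{h_t}$.} Since $\Gamma_t$ is $h_t$-self-adjoint and trace-free in rank two, its eigenvalues are $\pm\mu$ with $\mu\ge0$. Hence $\|\Gamma_t\|_{\op}=\mu=\sqrt{u/2}$, and it suffices to show $u\le2$.

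Pair the equation of Step 1 with $\Gamma_t$ and use $\Delta|\Gamma|^2=2\mathrm{Re}\langle\Delta\Gamma,\Gamma\rangle+2|\nabla\Gamma|^2$ for the Chern connection. This should give, up to a positive constant factor,
\[
\Delta_{g_X} u\;\ge\;2|\nabla\Gamma|^2+2t^2\Bigl(\bigl|[\Gamma,\Phi]\bigr|^2-2\,\mathrm{Re}\bigl\langle[\Gamma,\Phi],\Phi\bigr\rangle\Bigr).
\]
Here the two terms in the bracket come from $\langle[\Phi,[\Phi^*,\Gamma]],\Gamma\rangle=|[\Phi^*,\Gamma]|^2=|[\Gamma,\Phi]|^2$ and $\langle[\Phi,\Phi^*],\Gamma\rangle=\langle\Phi,[\Gamma,\Phi]\rangle$, respectively.

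\textbf{Step 3: pointwise algebra in rank two.} At a point, choose an $h_t$-unitary frame with $\Gamma=\mathrm{diag}(\mu,-\mu)$ and write $\Phi=\begin{pmatrix}a&b\\c&-a\end{pmatrix}dz$. Then
\[
[\Gamma,\Phi]=\begin{pmatrix}0&2\mu b\\-2\mu c&0\end{pmatrix},\qquad
|[\Gamma,\Phi]|^2=4\mu^2(|b|^2+|c|^2),\qquad
\langle[\Gamma,\Phi],\Phi\rangle=2\mu(|b|^2-|c|^2).
\]
So the zeroth-order term is at least $4\mu(\mu-1)(|b|^2+|c|^2)$, which is nonnegative whenever $\mu\ge1$.

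\textbf{Step 4: maximum principle.} Suppose $\max u>2$. On the open set $U=\{u>2\}$ the function $u$ is subharmonic by Steps 2 and 3. The strong maximum principle then forces $u$ to be constant on the component of $U$ containing a maximum point. Since $X$ is connected and $u\le2$ on $\partial U$, that component would be all of $X$, with $u$ constant and $\mu>1$ everywhere.

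This degenerate situation is where I expect the real difficulty. With $u$ constant, the inequality of Step 2 forces $\nabla\Gamma=0$ and $b=c=0$ on all of $X$. Thus $\Gamma$ is parallel with distinct eigenvalues $\pm\mu$, and $\Phi$ is diagonal in the resulting splitting $E=L_+\oplus L_-$, which is holomorphic and $h_t$-orthogonal. Then $[\Phi,\Phi^*]=0$, so $F_{h_t}=0$ and the right-hand side of the Step 1 equation vanishes. Moreover $\Phi$ preserves $L_\pm$ and $\deg L_\pm=0$ by flatness, which contradicts stability. Hence $u\le2$, that is, $\|\Gamma_t\|_{\op}\le1$.

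The main checks are getting the sign conventions of Step 1 right, so that the cross term really carries the factor $2$ relative to the quadratic term, and making the strong maximum principle rigorous where $\mu$ is not smooth. Working with $u$ rather than $\mu$ avoids the latter issue.
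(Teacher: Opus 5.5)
Your proposal is correct and follows essentially the paper's argument: the linearized Hitchin equation, a rank-two Bochner inequality giving subharmonicity on $\{\|\Gamma_t\|_{\op}>1\}$, the strong maximum principle, and the same stability contradiction from a parallel, $\Phi$-invariant, degree-zero splitting when that set is all of $X$. The only difference is cosmetic: you run the argument on $u=|\Gamma_t|^2=2\gamma_t^2$ with the cruder bound $|\langle[\Gamma,\Phi],\Phi\rangle|\le2\mu(|b|^2+|c|^2)$, whereas the paper works with $\gamma_t$ itself and keeps the exact coefficients $2(1+\gamma_t)|b|^2+2(\gamma_t-1)|c|^2$. There is one small slip: from your own Step 1 equation the cross term in Step 2 enters as $+2\langle[\Gamma,\Phi],\Phi\rangle$, and your fixed-point check with $\Gamma=\mathrm{diag}(1,-1)$, $b=0$, $c\neq0$ detects this. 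The Step 3 lower bound $4\mu(\mu-1)(|b|^2+|c|^2)$ holds for either sign, so the argument is unaffected.
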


The mechanism behind Proposition~\ref{prop:Gammabound} is specific to rank two.
The two eigenvalues of the trace-free Hermitian endomorphism $\Gamma_t$ are
$\gamma_t$ and $-\gamma_t$.  The linearized Hitchin equation gives a scalar
Bochner inequality for $\gamma_t$ on the region $\{\gamma_t>1\}$, and the
maximum principle forces $\gamma_t\le1$.  Equality at one point propagates and
produces a parallel grading, which is exactly the $\C^*$-fixed Hodge-bundle
case.  

The rank-two result is sharp with respect to the rank.  In every rank $n\ge3$, the Dai--Li pointwise energy monotonicity conjecture fails for some stable, non-$\C^*$-fixed Higgs bundle.

\begin{theorem}\label{thm:highrank-counterexample}
Let $X$ be any compact Riemann surface of genus $g\ge2$, and let
$n\ge3$.  Then there exist a stable $\mathrm{SL}(n,\C)$ Higgs bundle
$(E_n,\Phi_n)$ which is not fixed by the $\C^*$-action and a point $p\in X$
with $\Phi_n(p)\ne0$ such that, for the normalized harmonic metric $h_{n,t}$
of $(E_n,t\Phi_n)$ and the corresponding energy density $e_{n,t}$, we have
\begin{equation}\label{eq:highrank-limits}
 e_{n,t}(p)
 =a_{0,n}t^{-2(n-2)}+O\!\left(t^{-3(n-2)}\right),
\end{equation}
and
\begin{equation}\label{eq:highrank-derivative-limit}
 \partial_t e_{n,t}(p)
 =-2(n-2)a_{0,n}t^{-2(n-2)-1}
 +O\!\left(t^{-3(n-2)-1}\right)
\end{equation}
for some $a_{0,n}>0$.  In particular,
\[
 e_{n,t}(p)\longrightarrow0,
 \qquad
 \partial_t e_{n,t}(p)<0
\]
for all sufficiently large $t$, whereas $e_{n,1}(p)>0$. 
\end{theorem}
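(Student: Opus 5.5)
The plan is to build the counterexample from a Higgs bundle whose large-$t$ behaviour is governed by a limiting configuration that vanishes at a chosen point $p$. Concretely, I would take $\Phi_n$ nilpotent with a degenerate Jordan structure near $p$. Then in the large-$t$ limit the harmonic metric decouples, and the energy density at $p$ decays like a negative power of $t$.

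The construction I would try first is a direct sum.
\begin{enumerate}
\item Start from a $\C^*$-fixed stable rank-two piece. Then add a length-$(n-1)$ Hodge-type chain with holomorphic maps $L_1\to L_2\to\cdots\to L_{n-1}$ vanishing at $p$.
\item Add one extra off-diagonal holomorphic section $\beta$ connecting the two chains. This breaks the $\C^*$-fixedness but keeps stability; the latter I would check via the usual slope inequalities for $\Phi$-invariant subbundles $L_k\oplus\cdots\oplus L_n$, choosing degrees in the allowed window, which exists since $g\ge2$.
\item Arrange that $\Phi_n(p)\neq0$ only through $\beta(p)$. The chain composed of $n-2$ consecutive maps then governs how $\beta$ is rescaled under the gauge $\operatorname{diag}(t^{(n-1)/2},\dots,t^{-(n-1)/2})$ that conjugates $t\Phi$ back.
\end{enumerate}
Under this gauge the Hitchin equation for $(E_n,t\Phi_n)$ becomes a perturbation of the Hitchin equation of the fixed graded part, and the $\beta$-term is weighted by $t^{-(n-2)}$.

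The core analytic step is an expansion
\[
h_{n,t}=g_t\bigl(h_\infty+t^{-(n-2)}h^{(1)}+O(t^{-2(n-2)})\bigr)g_t,
\]
where $g_t$ is the explicit diagonal scaling and $h_\infty$ is the harmonic metric of the graded $\C^*$-fixed limit. I would obtain it by the implicit function theorem for the Hitchin operator at $h_\infty$. The linearization there is invertible on trace-free Hermitian endomorphisms precisely because the limit is stable.

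Next, write $e_{n,t}=2|t\Phi_n|^2_{h_{n,t}}$ (up to normalization) and evaluate it at $p$. By arranging the vanishing orders at $p$, every component of $t\Phi_n(p)$ except the $\beta$-component vanishes. The $\beta$-component has norm $t\cdot(\text{ratio of scalings})\,|\beta(p)|$, which scales like $t^{-(n-2)}$ times an $O(1)$ factor with limit $|\beta(p)|^2_{h_\infty}>0$. Squaring gives $a_{0,n}t^{-2(n-2)}$ with $a_{0,n}=|\beta(p)|^2_{h_\infty}$ (times constants), and the first correction is $O(t^{-3(n-2)})$ from $h^{(1)}$. For the derivative formula \eqref{eq:highrank-derivative-limit} I would differentiate the expansion in $t$. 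This is legitimate because the implicit-function-theorem solution is smooth in the parameter $s=t^{-(n-2)}$. Finally, $e_{n,1}(p)>0$ is automatic, since $\Phi_n(p)\ne0$.

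The main obstacle is the bookkeeping of weights. One has to choose line-bundle degrees and vanishing orders so that stability holds, the $\C^*$-limit is stable and smooth, and the $\beta$-entry really carries weight exactly $t^{-(n-2)}$ with all other entries at $p$ vanishing. I would first test the simplest pattern, a single chain $L_1\to\cdots\to L_n$ of length $n$ with all chain maps vanishing at $p$ plus $\beta:L_1\to L_{n-1}$, and adjust if stability fails.
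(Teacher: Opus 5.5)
Your analytic mechanism is the paper's. Conjugating $t\Phi_n$ by the diagonal gauge $g_t$ produces a $\C^*$-fixed field plus $u\beta$ with $u=t^{-(n-2)}$. The harmonic metric depends smoothly on $u$ near a stable fixed point, and the energy at $p$ is $u^2a(u)$ with $a(0)>0$. The paper states this as the exact identity $h_{n,t}=g_t^*k_{n,u}$ rather than an expansion, but it is the same argument.

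The gap is that the theorem is an existence statement, and you never exhibit the Higgs bundle. The candidates you do sketch fail:
\begin{itemize}
\item \textbf{Steps 1--2.} Removing $\beta$ leaves a direct sum of a rank-two fixed point and a separate chain. That limit is decomposable, hence not stable. The trace-free endomorphism acting by different scalars on the two summands is parallel and commutes with the Higgs field, so it lies in the kernel of the linearized Hitchin operator. Your implicit-function-theorem step therefore breaks down.
\item \textbf{The ``simplest pattern''.} The map $\beta\colon L_1\to L_{n-1}$ spans only $n-2$ steps of the chain. Under your gauge it is weighted by $t\cdot t^{-(n-2)}=t^{-(n-3)}$, not $t^{-(n-2)}$, and for $n=3$ it does not decay at all. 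The extra entry must be the corner $L_1\to L_n\otimes K_X$.
\end{itemize}

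Even with the corner entry, the ``bookkeeping'' is the real content of the theorem, and it is not a matter of choosing degrees in a window. You need all of the following:
\begin{itemize}
\item chain maps vanishing at a common point $p$;
\item a nonzero composite of the chain maps, so that $\Phi$ is generically regular nilpotent and the tails $L_k\oplus\cdots\oplus L_n$ are the only saturated invariant subsheaves (this is what justifies checking slopes only on them, for both $\Phi_n$ and its limit);
\item a corner section with $\beta(p)\neq0$;
\item $\det E\simeq\mathcal O$;
\item negative-degree tails.
\end{itemize}
These conditions are genuinely restrictive. For example, take $n=3$ with $\beta$ nowhere vanishing. Then $L_1^{-1}L_3K_X\cong\mathcal O$, so $\phi_1\phi_2$ is a nonzero holomorphic differential with a double zero at $p$. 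In genus two this forces $p$ to be a Weierstrass point.

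The paper resolves all of this uniformly on every $X$ as follows:
\begin{itemize}
\item take an odd theta characteristic $L$, so that $L^2\simeq K_X$ and some $\sigma\in H^0(L)$ is nonzero, and let $p$ be a zero of $\sigma$;
\item set $E_n=L^{-1}\oplus\mathcal O^{\oplus(n-2)}\oplus L$, with chain entries $\sigma,\sigma^2,\dots,\sigma^2,\sigma$ and corner entry the constant $1\in H^0(K_XL^{-2})\cong H^0(\mathcal O_X)$;
\item stability of every $\Psi_{n,u}$, including the limit $u=0$, then follows from generic regular nilpotency together with $\deg F_r=-(g-1)<0$.
\end{itemize}
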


The counterexamples lie entirely in the nilpotent cone and hence are outside the generically regular semisimple regime of the usual large-scale asymptotic theory. They are also consistent with the recent general asymptotic picture of Sagman--Smillie \cite{SagmanSmillie}: after a \(t\)-dependent complex gauge transformation, the large-\(t\) family converges to a nilpotent harmonic bundle. In our construction this gauge transformation and the limiting Hodge bundle are explicit.

Moreover, Li \cite[Conjecture~10.3]{LiSurvey} raised the following conjecture: if $(E,\Phi)$ is a stable $\mathrm{SL}(n,\C)$ Higgs bundle and we denote
\[
 [(E_o,\Phi_o)]
 :=\lim_{t\to0} t\cdot[(E,\Phi)]
\]
in the Higgs-bundle moduli space, then
\begin{align}\label{eq:Liconj}
 e_{(E,\Phi)}(p)\ge e_{(E_o,\Phi_o)}(p)
 \qquad\text{for every }p\in X.
\end{align}

This conjecture was known in several special cases. Li proved it for Higgs bundles in the Hitchin section \cite{LiHarmonic}, while Dai and Li established the stronger pointwise monotonicity along the entire $\C^*$-orbit for stable cyclic Higgs bundles \cite{DaiLiCyclic}.

As an immediate consequence of Theorem~\ref{thm:mainSL2}(i), this conjectured inequality \eqref{eq:Liconj} holds for stable
$\mathrm{SL}(2,\mathbb C)$ Higgs bundles. However, we will use $(E_n,\Phi_n)$ in Theorem~\ref{thm:highrank-counterexample} to produce a counterexample to the above conjecture for any $n\geq 3$; see Section~\ref{subsec:Li-conjecture-10-3}.

Sections~\ref{sec:conventions}--\ref{sec:maximum-principle} prove
Theorem~\ref{thm:mainSL2} and Proposition~\ref{prop:Gammabound}; Section~\ref{sec:highrank-counterexample} proves Theorem~\ref{thm:highrank-counterexample}.

\subsection*{Acknowledgements:} The author would like to thank Qiongling Li for her encouragement and valuable comments on this work.
\subsection*{AI Declaration:} ChatGPT was used for calculations, proving intermediate results, and polishing the writing. All
mathematical statements have been independently checked by the author, who takes full
responsibility for the content.

\section{Conventions and the harmonic metric}\label{sec:conventions}

Fix $t>0$ and set $\Phi_t:=t\Phi.$ Because the determinant metric of $h_t$ is fixed and $\tr\Phi=0$, the rank-two Hitchin equation takes the trace-free form
\begin{equation}\label{eq:Hitchin}
F_{h_t}+[\Phi_t,\Phi_t^{\star_{h_t}}]=0.
\end{equation}
The holomorphic structure is fixed as $t$ varies. Stability is unchanged for $t>0$.

\begin{lemma}[Smooth dependence]\label{lem:smoothdependence}
After the determinant metric is fixed, the harmonic metric $h_t$ depends smoothly on $t\in(0,\infty)$. Such a harmonic metric is called \textbf{normalized}.
\end{lemma}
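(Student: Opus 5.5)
We prove the lemma with the implicit function theorem applied to the Hitchin equation, written as a nonlinear elliptic equation in a background-relative unknown. Fix a smooth reference metric $h_0$ on $E$ with the prescribed determinant metric. Write any other such metric as $h=h_0e^{s}$, where $s$ is a trace-free, $h_0$-self-adjoint endomorphism. Let $\mathcal S^{k,\alpha}$ denote the Banach space of such sections of class $C^{k,\alpha}$. Define
\[
 \mathcal N:(0,\infty)\times\mathcal S^{2,\alpha}\to\mathcal S^{0,\alpha},\qquad
 \mathcal N(t,s):=\sqrt{-1}\Lambda\bigl(F_{h_0e^s}+[t\Phi,(t\Phi)^{\star_{h_0e^s}}]\bigr),
\]
where $\Lambda$ is contraction with the Kähler form of $g_X$. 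This map is smooth, in fact real-analytic, in $(t,s)$, since it is built from $e^{s}$, its derivatives and $t^2$. By existence and uniqueness of the normalized harmonic metric for the stable bundle $(E,t\Phi)$ (stability holds for all $t>0$), for each $t$ there is a unique $s_t$ with $\mathcal N(t,s_t)=0$. Elliptic regularity makes each $s_t$ smooth.

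The key step is to show that the partial derivative $L_t:=D_s\mathcal N(t,s_t)$ is an isomorphism $\mathcal S^{2,\alpha}\to\mathcal S^{0,\alpha}$. We compute it at the harmonic metric $h=h_t$, identifying a variation $\dot s$ with the $h_t$-self-adjoint trace-free endomorphism $\dot h=h_t^{-1}\delta h$. The standard linearization gives
\[
 L_t\dot h=\sqrt{-1}\Lambda\bigl(\bar\partial\partial_{h_t}\dot h+[\Phi_t,[\Phi_t^{\star},\dot h]]\bigr)
 =\Delta_{h_t}\dot h+\text{(zeroth-order Higgs term)},
\]
up to sign conventions. This operator is elliptic, formally self-adjoint with respect to the $L^2$ inner product induced by $h_t$, and hence Fredholm of index zero. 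It therefore suffices to prove injectivity. Pairing $L_t\dot h$ with $\dot h$ and integrating by parts gives
\[
 \int_X\langle L_t\dot h,\dot h\rangle=\|\bar\partial_E\dot h\|^2+\|[\Phi_t,\dot h]\|^2
\]
up to positive constants. So a kernel element satisfies $\bar\partial_E\dot h=0$ and $[\Phi_t,\dot h]=0$. Such an $\dot h$ is a holomorphic endomorphism commuting with $\Phi$. Stability forces the $\Phi$-invariant holomorphic endomorphisms to be scalars, and trace-freeness then gives $\dot h=0$.

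The implicit function theorem now yields a smooth local branch $t\mapsto s(t)$ through each $(t_0,s_{t_0})$. By uniqueness it coincides with $s_t$, so $t\mapsto s_t\in C^{2,\alpha}$ is smooth. Bootstrapping with Schauder estimates, working in all $C^{k,\alpha}$ at once, shows that $(t,x)\mapsto h_t(x)$ is jointly smooth. The main obstacle is the injectivity computation: the signs of the linearized operator and the Weitzenböck-type identity have to be checked carefully, using the Hitchin equation \eqref{eq:Hitchin} to cancel the curvature terms. The step where stability kills the kernel also needs care, namely that simplicity of stable Higgs bundles is applied to trace-free endomorphisms.
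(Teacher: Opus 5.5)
Your argument is correct. The paper gives no argument of its own here: its proof is a one-line appeal to Hitchin \cite{Hitchin1987}. What you wrote is the standard implicit-function-theorem proof that the citation stands for:
\begin{enumerate}
\item existence and uniqueness of the normalized solution for each $t$;
\item invertibility of the linearization at a solution, because its kernel consists of trace-free holomorphic endomorphisms commuting with $\Phi$, which vanish since stable Higgs bundles are simple;
\item global uniqueness, which identifies the local IFT branch with $s_t$.
\end{enumerate}

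Spelling this out gives you two things the paper uses silently. First, it yields joint smoothness of $h_t(x)$ in $(t,x)$, which is needed for $\Gamma_t$ to be a smooth endomorphism and for differentiating \eqref{eq:Hitchinlocal} in $s$. Second, the same proof applies verbatim to the complex parameter $u$ in Lemma~\ref{lem:highrank-smoothness}, since $\Psi_{n,u}$ depends polynomially on $u$ and is stable for every $u$.

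Two small points need tidying.

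First, $\mathcal N(t,s)$ is self-adjoint with respect to $h_0e^{s}$, not $h_0$, so as written it does not land in $\mathcal S^{0,\alpha}$. Use $e^{s/2}\mathcal N(t,s)e^{-s/2}$ instead. This is $h_0$-self-adjoint, and at a zero its derivative is just the conjugate of $D_s\mathcal N$. Also choose $h_0$ with flat determinant metric, so that the target really is trace-free.

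Second, with the paper's conventions the linearization is $\dot h\mapsto D_{h_t}''D_{h_t}'\dot h-[A_t,[A_t^{\star_{h_t}},\dot h]]$ (compare \eqref{eq:linearized}). Integrating by parts gives, up to a positive constant, $-\|D_{h_t}'\dot h\|^2-\|[A_t^{\star_{h_t}},\dot h]\|^2$ directly. So no curvature cancellation is needed for the sign. Hitchin's equation enters elsewhere: it shows that $L_t\dot h$ is again $h_t$-self-adjoint. The two non-self-adjoint defects, $[\partial_{\bar z}(h_t^{-1}\partial_z h_t),\dot h]$ and $[[A_t,A_t^{\star_{h_t}}],\dot h]$, cancel by \eqref{eq:Hitchinlocal}. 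That is exactly what your formal self-adjointness and index-zero step requires.
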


\begin{proof}
This is the standard smooth-dependence result for stable solutions of Hitchin's equations with fixed determinant; see Hitchin \cite{Hitchin1987}.
\end{proof}

We may therefore set $s:=\log t$, $\partial_s:=t\partial_t$ and \[
\Gamma_t:= h_t^{-1}\partial_s h_t.\]
This $\Gamma_t$ is a smooth endomorphism. Since the determinant metric is fixed,
\[
\tr \Gamma_t=\partial_s\log\det h_t=0.
\]
Moreover $\Gamma_t$ is Hermitian self-adjoint with respect to $h_t$. Since $\Gamma_t$ is also trace-free, its eigenvalues are real and can be written as $\gamma_t$ and $-\gamma_t$, where we order them so that $\gamma_t\ge0$. Thus, at each point, there is an $h_t$-unitary basis in which
\begin{equation}\label{eq:Gammadiag}
\Gamma_t=
\begin{pmatrix}
\gamma_t&0\\
0&-\gamma_t
\end{pmatrix},\qquad \gamma_t\ge0,
\end{equation}

Let $z$ be a local holomorphic coordinate and write
\[
\Phi_t=A_t\,dz.
\]
If $D_{h_t}=D_{h_t}'+D_{h_t}''$ is the Chern connection, then in a local trivialization, we have
\[
D_{h_t}'=\partial+h_t^{-1}\partial h_t,
\qquad D_{h_t}''=\bar\partial.
\]
When writing local coefficient identities below, we suppress the factors $dz$ and $d\bar z$; thus, for example,
\[
D_{h_t}'\Gamma_t
=\partial_z\Gamma_t+[h_t^{-1}\partial_z h_t,\Gamma_t],
\qquad
D_{h_t}''\Gamma_t=\partial_{\bar z}\Gamma_t.
\]
We use $A_t^{\star_{h_t}}$ for the $h_t$-adjoint of $A_t$. Let
\[
\varepsilon_t:=\tr\!\left(A_tA_t^{\star_{h_t}}\right).
\]

The harmonic-map energy density is, up to a fixed positive
normalization constant, the pointwise squared norm of the Higgs field;
see, for instance, \cite[\S2.3, Eq.~(6)]{LiHarmonic}.
Thus, if
\[
g_X=\lambda(z)|dz|^2,\qquad \Phi_t=A_t\,dz,
\]
then
\begin{equation}\label{eq:energynormalization}
e_t(z)
=
C\,\lambda(z)^{-1}
\tr\!\left(A_tA_t^{\star_{h_t}}\right)
\end{equation}
for some constant \(C>0\) independent of \(t\). Since the background
metric is also independent of \(t\),
\[
\operatorname{sign}(\partial_t e_t)
=
\operatorname{sign}(\partial_t\varepsilon_t).
\]
Hence it suffices to prove pointwise monotonicity of \(\varepsilon_t\).

\section{Linearization, energy variation, and the rank-two Bochner identity}\label{sec:linearization}

\subsection{Variation formulas}\label{subsec:variation-formulas}

The first step is to differentiate Hitchin's equation along the real scaling orbit.

\begin{lemma}[Variation of the adjoint]\label{lem:adjoint}
With $s=\log t$ and $\Gamma_t=h_t^{-1}\partial_s h_t$, one has
\begin{equation}\label{eq:adjointvariation}
\partial_s A_t=A_t,
\qquad
\partial_s A_t^{\star_{h_t}}=A_t^{\star_{h_t}}+[A_t^{\star_{h_t}},\Gamma_t].
\end{equation}
\end{lemma}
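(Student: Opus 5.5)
The plan is to compute directly in a local holomorphic trivialization, where the formula $A^{\star_h}=h^{-1}\bar A^{T}h$ is available.

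\textbf{First identity.} Here $A_t$ is the coefficient of $\Phi_t=t\Phi$, so $A_t=tA_1$. Since $\partial_s=t\partial_t$, this gives $\partial_sA_t=tA_1=A_t$ at once.

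\textbf{Second identity.} In a local holomorphic frame, write $h_t$ for the Hermitian matrix of the metric, with the convention $h(u,v)=\bar u^{T}h\,v$ (or its transpose, consistently with $D'_{h}=\partial+h^{-1}\partial h$). Then the adjoint is
\[
A_t^{\star_{h_t}}=h_t^{-1}\bar A_t^{T}h_t .
\]
Differentiate this with respect to $s$ by the product rule, using three facts:
\[
\partial_s(h_t^{-1})=-h_t^{-1}(\partial_sh_t)h_t^{-1},\qquad \partial_s\bar A_t^{T}=\bar A_t^{T},\qquad h_t^{-1}\partial_sh_t=\Gamma_t .
\]
The result is
\[
\partial_sA_t^{\star_{h_t}}=-\Gamma_t h_t^{-1}\bar A_t^{T}h_t+h_t^{-1}\bar A_t^{T}h_t+h_t^{-1}\bar A_t^{T}h_t\Gamma_t .
\]
The middle term is $A_t^{\star_{h_t}}$. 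The outer two terms combine to $A_t^{\star_{h_t}}\Gamma_t-\Gamma_tA_t^{\star_{h_t}}=[A_t^{\star_{h_t}},\Gamma_t]$, which is the claimed formula. Smoothness of $h_t$ in $t$ (Lemma~\ref{lem:smoothdependence}) justifies the differentiation.

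\textbf{Convention check.} The only point needing care is confirming that the adjoint formula matches the convention for which $\Gamma_t=h_t^{-1}\partial_sh_t$ is the relevant endomorphism. With the transposed convention the adjoint is $h^{-1}A^{*}h$ for the appropriate $h$, and the same computation applies. Either way, the sign of the commutator comes out as $[A^{\star},\Gamma]$.

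\textbf{Independence of the frame.} Both sides of \eqref{eq:adjointvariation} are tensorial. The frame is fixed and independent of $t$, since the holomorphic structure does not vary. Hence the local identity gives the global statement.
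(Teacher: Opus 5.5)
Your proof is correct and is essentially the paper's argument. You write $A_t^{\star_{h_t}}=h_t^{-1}A_t^{\dagger}h_t$ in a fixed local frame and differentiate with the product rule, using $\partial_s A_t=A_t$, $\partial_s h_t=h_t\Gamma_t$ and $\partial_s h_t^{-1}=-\Gamma_t h_t^{-1}$; this yields $A_t^{\star_{h_t}}+[A_t^{\star_{h_t}},\Gamma_t]$, exactly as in the paper.
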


\begin{proof}
The first identity follows directly from $\Phi_t=t\Phi$. In a fixed local frame,
\[
A_t^{\star_{h_t}}=h_t^{-1}A_t^{\dagger}h_t.
\]
Differentiating and using $\partial_s h_t=h_t\Gamma_t$ and $\partial_s h_t^{-1}=-\Gamma_t h_t^{-1}$ gives
\[
\partial_s A_t^{\star_{h_t}}
=-\Gamma_t A_t^{\star_{h_t}}+A_t^{\star_{h_t}}+A_t^{\star_{h_t}}\Gamma_t
=A_t^{\star_{h_t}}+[A_t^{\star_{h_t}},\Gamma_t].\qedhere
\]
\end{proof}

We now fix the sign convention carefully. Since
\[
F_{h_t}=-\partial_{\bar z}(h_t^{-1}\partial_z h_t)\,dz\wedge d\bar z
\]
and
\[
[\Phi_t,\Phi_t^{\star_{h_t}}]=[A_t,A_t^{\star_{h_t}}]dz\wedge d\bar z,
\]
Equation \eqref{eq:Hitchin} becomes
\begin{equation}\label{eq:Hitchinlocal}
D_{h_t}''(h_t^{-1}\partial_z h_t)=[A_t,A_t^{\star_{h_t}}].
\end{equation}
Differentiating gives the key linearized equation.

\begin{proposition}[Linearized Hitchin equation]\label{prop:linearized}
The endomorphism $\Gamma_t=h_t^{-1}\partial_s h_t$ satisfies
\begin{equation}\label{eq:linearized}
D_{h_t}''D_{h_t}'\Gamma_t
=2[A_t,A_t^{\star_{h_t}}]+[A_t,[A_t^{\star_{h_t}},\Gamma_t]].
\end{equation}
\end{proposition}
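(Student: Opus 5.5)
The plan is to differentiate the local Hitchin equation \eqref{eq:Hitchinlocal} in $s=\log t$ and then rewrite the left-hand side covariantly in terms of $\Gamma_t$. We take a fixed local holomorphic frame, so that $D''_{h_t}=\bar\partial$ is independent of $t$ and only $h_t^{-1}\partial_z h_t$ varies on the left.

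\textbf{Left-hand side.} The key identity is
\[
\partial_s\bigl(h_t^{-1}\partial_z h_t\bigr)=\partial_z\Gamma_t+[h_t^{-1}\partial_z h_t,\Gamma_t]=D'_{h_t}\Gamma_t .
\]
To verify it, write $\partial_s(h^{-1}\partial_z h)=-\Gamma h^{-1}\partial_z h+h^{-1}\partial_z(h\Gamma)$, using $\partial_s h=h\Gamma$. Expanding gives
\[
-\Gamma h^{-1}\partial_z h+h^{-1}(\partial_z h)\Gamma+\partial_z\Gamma,
\]
which is exactly $\partial_z\Gamma+[h^{-1}\partial_z h,\Gamma]$. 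Since $\partial_s$ commutes with $\partial_{\bar z}$ in the fixed frame, the $s$-derivative of the left-hand side of \eqref{eq:Hitchinlocal} is $D''_{h_t}D'_{h_t}\Gamma_t$.

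\textbf{Right-hand side.} By Lemma~\ref{lem:adjoint},
\[
\partial_s[A_t,A_t^{\star_{h_t}}]=[A_t,A_t^{\star_{h_t}}]+[A_t,\,A_t^{\star_{h_t}}+[A_t^{\star_{h_t}},\Gamma_t]].
\]
This equals $2[A_t,A_t^{\star_{h_t}}]+[A_t,[A_t^{\star_{h_t}},\Gamma_t]]$, which is \eqref{eq:linearized}.

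\textbf{Justification and main obstacle.} Interchanging derivatives is legitimate because $h_t$ is smooth jointly in $(t,x)$ by Lemma~\ref{lem:smoothdependence}. The resulting identity is frame-independent, since both sides are tensorial endomorphism-valued expressions after the $dz\wedge d\bar z$ factors are restored.

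The only delicate point is the sign and ordering bookkeeping in the first computation, namely checking that the variation of the connection form is $D'\Gamma$ and not $-D'\Gamma$ or $D'\Gamma^{\star}$. This is settled by the explicit expansion above, using $\partial_s h^{-1}=-\Gamma h^{-1}$.
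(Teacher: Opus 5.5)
Your proposal is correct and follows the paper's proof essentially verbatim. Like the paper, you differentiate \eqref{eq:Hitchinlocal} in a fixed holomorphic frame, identify $\partial_s(h_t^{-1}\partial_z h_t)$ with $D_{h_t}'\Gamma_t$ via $\partial_s h_t^{-1}=-\Gamma_t h_t^{-1}$, and apply Lemma~\ref{lem:adjoint} to the right-hand side; your extra remarks on interchanging derivatives and frame-independence are valid additions.
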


\begin{proof}
Differentiate \eqref{eq:Hitchinlocal}. In a holomorphic frame,
\[
\begin{aligned}
\partial_s(h_t^{-1}\partial_z h_t)
&=-\Gamma_t h_t^{-1}\partial_z h_t+h_t^{-1}\partial_z(h_t\Gamma_t)\\
&=\partial_z\Gamma_t+[h_t^{-1}\partial_z h_t,\Gamma_t]
=D_{h_t}'\Gamma_t.
\end{aligned}
\]
Since $D_{h_t}''=\bar\partial$ is independent of $t$, the derivative of the left-hand side of \eqref{eq:Hitchinlocal} is therefore $D_{h_t}''D_{h_t}'\Gamma_t$. On the other hand, by Lemma \ref{lem:adjoint},
\[
\begin{aligned}
\partial_s[A_t,A_t^{\star_{h_t}}]
&=[\partial_sA_t,A_t^{\star_{h_t}}]+[A_t,\partial_sA_t^{\star_{h_t}}]\\
&=[A_t,A_t^{\star_{h_t}}]+[A_t,A_t^{\star_{h_t}}+[A_t^{\star_{h_t}},\Gamma_t]]\\
&=2[A_t,A_t^{\star_{h_t}}]+[A_t,[A_t^{\star_{h_t}},\Gamma_t]].
\end{aligned}
\]
This proves \eqref{eq:linearized}.
\end{proof}

\subsection{Derivative of the pointwise energy density}\label{subsec:energy-derivative}

We next calculate the $s$-derivative of $\varepsilon_t=\tr\!\left(A_tA_t^{\star_{h_t}}\right)$.

\begin{proposition}[Energy derivative]\label{prop:energyderivative}
At any point $x\in X$, choose an $h_t$-unitary eigenbasis of the Hermitian self-adjoint endomorphism $\Gamma_t$, so that \eqref{eq:Gammadiag} holds and write
\begin{equation}\label{eq:Amatrix}
A_t=
\begin{pmatrix}
a&b\\
c&-a
\end{pmatrix}.
\end{equation}
Then
\begin{equation}\label{eq:energyderivative}
\partial_s\varepsilon_t
=4|a|^2+2(1+\gamma_t)|b|^2+2(1-\gamma_t)|c|^2.
\end{equation}
\end{proposition}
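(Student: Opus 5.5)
We differentiate $\varepsilon_t=\tr(A_tA_t^{\star_{h_t}})$ directly in $s$ using Lemma~\ref{lem:adjoint}, and then evaluate the result in the $h_t$-unitary eigenbasis of $\Gamma_t$. No use of the linearized equation (Proposition~\ref{prop:linearized}) is needed here, since this is a purely pointwise algebraic identity.

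\textbf{Step 1: differentiate.} By the product rule and \eqref{eq:adjointvariation},
\[
\partial_s\varepsilon_t=\tr\bigl(A_tA_t^{\star_{h_t}}\bigr)+\tr\bigl(A_tA_t^{\star_{h_t}}\bigr)+\tr\bigl(A_t[A_t^{\star_{h_t}},\Gamma_t]\bigr)
=2\varepsilon_t+\tr\bigl(\Gamma_t[A_t,A_t^{\star_{h_t}}]\bigr).
\]
The last equality uses cyclicity of the trace:
\[
\tr(AA^\star\Gamma-A\Gamma A^\star)=\tr(\Gamma AA^\star-\Gamma A^\star A).
\]

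\textbf{Step 2: evaluate in the eigenbasis.} Fix $x$ and choose the $h_t$-unitary basis in which \eqref{eq:Gammadiag} holds and $A_t$ has the form \eqref{eq:Amatrix}. In a unitary basis the $h_t$-adjoint is the conjugate transpose, so
\[
A_t^{\star_{h_t}}=\begin{pmatrix}\bar a&\bar c\\ \bar b&-\bar a\end{pmatrix}.
\]
Then $\varepsilon_t=2|a|^2+|b|^2+|c|^2$. Next compute the diagonal entries of $[A_t,A_t^{\star_{h_t}}]$:
\[
(AA^\star)_{11}=|a|^2+|b|^2,\qquad (A^\star A)_{11}=|a|^2+|c|^2,
\]
so the $(1,1)$ entry of the commutator is $|b|^2-|c|^2$. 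Since the commutator is trace-free, its $(2,2)$ entry is $|c|^2-|b|^2$. Because $\Gamma_t$ is diagonal, only these diagonal entries contribute, giving
\[
\tr\bigl(\Gamma_t[A_t,A_t^{\star_{h_t}}]\bigr)=\gamma_t(|b|^2-|c|^2)-\gamma_t(|c|^2-|b|^2)=2\gamma_t(|b|^2-|c|^2).
\]

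\textbf{Step 3: combine.} Adding the two contributions,
\[
\partial_s\varepsilon_t=4|a|^2+2|b|^2+2|c|^2+2\gamma_t|b|^2-2\gamma_t|c|^2,
\]
which is exactly \eqref{eq:energyderivative}.

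The only delicate point is sign bookkeeping. One must keep the convention $\partial_sA^\star=A^\star+[A^\star,\Gamma]$ from Lemma~\ref{lem:adjoint}, and then use cyclicity to move $\Gamma$ correctly so that the coefficient of $|b|^2$ is $1+\gamma_t$ rather than $1-\gamma_t$. With the ordering $\gamma_t\ge0$ fixed in \eqref{eq:Gammadiag}, this yields the stated formula.
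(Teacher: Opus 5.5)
Your proof is correct and matches the paper's argument: differentiate with Lemma~\ref{lem:adjoint}, use cyclicity of the trace to rewrite the extra term as $\tr\bigl(\Gamma_t[A_t,A_t^{\star_{h_t}}]\bigr)$, then evaluate it in the unitary eigenbasis using that the commutator is trace-free with $(1,1)$ entry $|b|^2-|c|^2$. One minor remark: the normalization $\gamma_t\ge0$ plays no role in the identity itself, which holds for either sign of the eigenvalue.
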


\begin{proof}
By Lemma \ref{lem:adjoint},
\[
\begin{aligned}
\partial_s\varepsilon_t
&=\tr((\partial_sA_t)A_t^{\star_{h_t}})+\tr(A_t\partial_sA_t^{\star_{h_t}})\\
&=2\tr\!\left(A_tA_t^{\star_{h_t}}\right)+\tr(A_t[A_t^{\star_{h_t}},\Gamma_t])\\
&=2\tr\!\left(A_tA_t^{\star_{h_t}}\right)+\tr([A_t,A_t^{\star_{h_t}}]\Gamma_t).
\end{aligned}
\]
In the basis \eqref{eq:Gammadiag} and \eqref{eq:Amatrix},
\[
\tr\!\left(A_tA_t^{\star_{h_t}}\right)=2|a|^2+|b|^2+|c|^2
\]
and
\[
[A_t,A_t^{\star_{h_t}}]_{11}=|b|^2-|c|^2.
\]
Since $[A_t,A_t^{\star_{h_t}}]$ is trace-free,
\[
\tr([A_t,A_t^{\star_{h_t}}]\Gamma_t)=2\gamma_t(|b|^2-|c|^2).
\]
Combining these identities gives \eqref{eq:energyderivative}.
\end{proof}

Thus the non-strict part of Theorem \ref{thm:mainSL2} follows from Proposition \ref{prop:Gammabound}. The remainder of the proof is devoted first to the estimate $\gamma_t\le1$ in Section~\ref{subsec:metric-variation-bound} and then to its equality case in Section~\ref{sec:equality}.

\subsection{The rank-two Bochner identity}\label{subsec:bochner-identity}

Let \[U:=\{x\in X:\gamma_t(x)>0\}.\]
 
 On $U$ the two eigenvalues $\gamma_t$ and $-\gamma_t$ of the
self-adjoint endomorphism $\Gamma_t$ are distinct. Hence $\gamma_t$
is smooth on $U$, and the corresponding eigenspaces form smooth
Hermitian line subbundles. We may therefore work locally in a smooth
\(h_t\)-unitary eigenframe and use the notation
\eqref{eq:Gammadiag} and \eqref{eq:Amatrix}.

Changing the unitary eigenframe only changes $b$ and $c$ by unit
complex phases; hence $|b|^2$ and $|c|^2$ are well-defined smooth
scalar functions on $U$.

 The standard first-variation formula for a simple eigenvalue then gives
\[
\partial_z\gamma_t=(D_{h_t}'\Gamma_t)_{11},\qquad -\partial_z\gamma_t=(D_{h_t}'\Gamma_t)_{22}.
\]
Throughout this section, $|B|^2:=\tr(BB^{\star_{h_t}})$ denotes the Hilbert--Schmidt norm induced by $h_t$.  Since
\[
\tr(\Gamma_t^2)=2\gamma_t^2,
\]
we can extract a differential equation for $\gamma_t$ from the linearized Hitchin equation \eqref{eq:linearized}.

\begin{lemma}[Basic Bochner formula]\label{lem:bochnerbasic}
At a point where $\gamma_t>0$,
\begin{equation}\label{eq:basicbochner}
2|\partial_z\gamma_t|^2+2\gamma_t\partial_{\bar z}\partial_z\gamma_t
=|D_{h_t}'\Gamma_t|^2+\tr\bigl(\Gamma_t D_{h_t}''D_{h_t}'\Gamma_t\bigr).
\end{equation}
Moreover,
\begin{equation}\label{eq:eigenineq}
|D_{h_t}'\Gamma_t|^2\ge 2|\partial_z\gamma_t|^2.
\end{equation}
\end{lemma}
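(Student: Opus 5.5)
We prove \eqref{eq:basicbochner} by applying $\partial_{\bar z}\partial_z$ to the scalar identity $\tr(\Gamma_t^2)=2\gamma_t^2$ and computing both sides. We then obtain \eqref{eq:eigenineq} by reading off the diagonal entries of $D_{h_t}'\Gamma_t$ in a unitary eigenframe.

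\textbf{Left-hand side.} Since $\gamma_t$ is smooth near a point where $\gamma_t>0$, we have
\[
\partial_{\bar z}\partial_z(2\gamma_t^2)=4|\partial_z\gamma_t|^2+4\gamma_t\partial_{\bar z}\partial_z\gamma_t .
\]
Here we use that $\gamma_t$ is real, so $\partial_{\bar z}\gamma_t=\overline{\partial_z\gamma_t}$.

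\textbf{Right-hand side.} The trace of an endomorphism commutes with the Chern connection, so $\partial_z\tr(\Gamma_t^2)=2\tr(\Gamma_t D_{h_t}'\Gamma_t)$. Applying $\partial_{\bar z}$ and the Leibniz rule for $D_{h_t}''$ gives
\[
\partial_{\bar z}\partial_z\tr(\Gamma_t^2)=2\tr\bigl(D_{h_t}''\Gamma_t\,D_{h_t}'\Gamma_t\bigr)+2\tr\bigl(\Gamma_t D_{h_t}''D_{h_t}'\Gamma_t\bigr).
\]

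\textbf{Identifying the first-order term.} Because $\Gamma_t$ is $h_t$-self-adjoint and the Chern connection is unitary, $(D_{h_t}'\Gamma_t)^{\star_{h_t}}=D_{h_t}''\Gamma_t$ as coefficients: the $dz$ part and the $d\bar z$ part are swapped by the adjoint. Hence
\[
\tr\bigl(D_{h_t}''\Gamma_t\,D_{h_t}'\Gamma_t\bigr)=\tr\bigl(D_{h_t}'\Gamma_t(D_{h_t}'\Gamma_t)^{\star_{h_t}}\bigr)=|D_{h_t}'\Gamma_t|^2 .
\]
Equating the two computations and dividing by $2$ yields \eqref{eq:basicbochner}.

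\textbf{The inequality \eqref{eq:eigenineq}.} We work at a point of $U$ in an $h_t$-unitary eigenframe, where the Hilbert--Schmidt norm is the sum of the squared moduli of the matrix entries. By the first-variation formula recorded above, the diagonal entries of $D_{h_t}'\Gamma_t$ are $\partial_z\gamma_t$ and $-\partial_z\gamma_t$. Dropping the off-diagonal entries therefore gives $|D_{h_t}'\Gamma_t|^2\ge 2|\partial_z\gamma_t|^2$.

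\textbf{Main obstacle.} The only delicate point is the sign and adjoint bookkeeping in the identity $(D_{h_t}'\Gamma_t)^{\star}=D_{h_t}''\Gamma_t$ under the local coefficient conventions. I would verify it directly in a holomorphic frame. There $D''\Gamma=\bar\partial\Gamma$ and $\Gamma^{\star}=h^{-1}\Gamma^\dagger h=\Gamma$. Writing $\Gamma=h^{-1}M$ with $M=\partial_s h$ Hermitian, a short computation gives $(\bar\partial\Gamma)^{\star}=h^{-1}\overline{(\bar\partial\Gamma)}^{T}h=\partial\Gamma+[h^{-1}\partial h,\Gamma]$, as required.
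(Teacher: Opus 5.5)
Your proposal is correct and follows essentially the same route as the paper: apply $\partial_{\bar z}\partial_z$ to $\tr(\Gamma_t^2)=2\gamma_t^2$, use $D_{h_t}''\Gamma_t=(D_{h_t}'\Gamma_t)^{\star_{h_t}}$ to identify the first-order term with $|D_{h_t}'\Gamma_t|^2$, and read off the diagonal entries $\pm\partial_z\gamma_t$ of $D_{h_t}'\Gamma_t$ in a unitary eigenframe. The only difference is cosmetic. You verify the adjoint identity by an explicit holomorphic-frame computation, which is correct. The paper instead invokes unitarity of the Chern connection in a unitary normal frame.
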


\begin{proof}
Choose an $h_t$-unitary normal frame at the point, with its value chosen to diagonalize $\Gamma_t$. Since the Chern connection is unitary and $\Gamma_t=\Gamma_t^{\star_{h_t}}$,
\[
D_{h_t}''\Gamma_t=(D_{h_t}'\Gamma_t)^{\star_{h_t}}.
\]
Therefore
\[
\begin{aligned}
\partial_{\bar z}\partial_z\tr(\Gamma_t^2)
&=2\tr(D_{h_t}''\Gamma_t D_{h_t}'\Gamma_t)+2\tr(\Gamma_t D_{h_t}''D_{h_t}'\Gamma_t)\\
&=2|D_{h_t}'\Gamma_t|^2+2\tr(\Gamma_t D_{h_t}''D_{h_t}'\Gamma_t).
\end{aligned}
\]
The left side is $\partial_{\bar z}\partial_z(2\gamma_t^2)=4|\partial_z\gamma_t|^2+4\gamma_t\partial_{\bar z}\partial_z\gamma_t$, proving \eqref{eq:basicbochner}.

For \eqref{eq:eigenineq}, in the same frame the diagonal entries of $D_{h_t}'\Gamma_t$ are $\partial_z\gamma_t$ and $-\partial_z\gamma_t$. The off-diagonal entries only add nonnegative terms to the Hilbert--Schmidt norm, so
\[
|D_{h_t}'\Gamma_t|^2\ge |\partial_z\gamma_t|^2+|-\partial_z\gamma_t|^2=2|\partial_z\gamma_t|^2.\qedhere
\]
\end{proof}

The decisive rank-two algebra is the following.

\begin{lemma}\label{lem:rank2algebra}
At a point where $\Gamma_t$ and $A_t$ have the forms \eqref{eq:Gammadiag}--\eqref{eq:Amatrix},
\begin{equation}\label{eq:11entry}
(D_{h_t}''D_{h_t}'\Gamma_t)_{11}
=2\bigl((1+\gamma_t)|b|^2+(\gamma_t-1)|c|^2\bigr),
\end{equation}
and consequently
\begin{equation}\label{eq:traceGamma}
\tr\bigl(\Gamma_t D_{h_t}''D_{h_t}'\Gamma_t\bigr)
=4\gamma_t\bigl((1+\gamma_t)|b|^2+(\gamma_t-1)|c|^2\bigr).
\end{equation}
\end{lemma}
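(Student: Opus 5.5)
The plan is to read the $(1,1)$ entry directly off the linearized Hitchin equation \eqref{eq:linearized}, working in the $h_t$-unitary eigenbasis at the point. Since the right-hand side of \eqref{eq:linearized} involves only pointwise algebraic quantities $A_t$, $A_t^{\star_{h_t}}$ and $\Gamma_t$, no derivative information about the frame is needed. In a unitary basis the adjoint is the conjugate transpose, so
\[
A_t^{\star_{h_t}}=\begin{pmatrix}\bar a&\bar c\\ \bar b&-\bar a\end{pmatrix}.
\]

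First I will compute $[A_t,A_t^{\star_{h_t}}]_{11}=|b|^2-|c|^2$, which was already noted in the proof of Proposition~\ref{prop:energyderivative}. Next I will compute $[A_t^{\star_{h_t}},\Gamma_t]$. For diagonal $\Gamma_t=\mathrm{diag}(\gamma_t,-\gamma_t)$ one has $[M,\Gamma_t]_{ij}=M_{ij}(\Gamma_{jj}-\Gamma_{ii})$. This gives
\[
[A_t^{\star_{h_t}},\Gamma_t]=\begin{pmatrix}0&-2\gamma_t\bar c\\ 2\gamma_t\bar b&0\end{pmatrix}.
\]
Then the $(1,1)$ entry of $[A_t,N]$ with $N=[A_t^{\star_{h_t}},\Gamma_t]$ is $(A_tN)_{11}-(NA_t)_{11}$. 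Here $(A_tN)_{11}=b\cdot 2\gamma_t\bar b=2\gamma_t|b|^2$ and $(NA_t)_{11}=-2\gamma_t\bar c\cdot c=-2\gamma_t|c|^2$. Hence this entry equals $2\gamma_t(|b|^2+|c|^2)$.

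Adding twice the first commutator entry yields
\[
(D''D'\Gamma_t)_{11}=2(|b|^2-|c|^2)+2\gamma_t(|b|^2+|c|^2)=2\bigl((1+\gamma_t)|b|^2+(\gamma_t-1)|c|^2\bigr),
\]
which is \eqref{eq:11entry}.

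For \eqref{eq:traceGamma}, I note that $\tr(\Gamma_tM)=\gamma_t(M_{11}-M_{22})$ for diagonal $\Gamma_t$. So it suffices to show $M_{22}=-M_{11}$ for $M=D_{h_t}''D_{h_t}'\Gamma_t$, and this follows if $M$ is trace-free. Tracelessness holds because the right-hand side of \eqref{eq:linearized} is a sum of commutators. Alternatively, the left side is trace-free since $\tr\Gamma_t=0$ and traces commute with the covariant derivatives. Thus $\tr(\Gamma_tM)=2\gamma_tM_{11}$, giving the claimed formula.

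The only delicate point I expect is bookkeeping: the sign in the commutator $[A^\star,\Gamma]$ and the correct identification of the adjoint entries in the unitary frame. I will double-check these against the formula $\partial_s\varepsilon_t=2\varepsilon_t+\tr([A,A^\star]\Gamma)$ already obtained in the proof of Proposition~\ref{prop:energyderivative}.
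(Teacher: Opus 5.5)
Your proposal is correct and matches the paper's proof: both read off the $(1,1)$ entry of the right-hand side of \eqref{eq:linearized} in the $h_t$-unitary eigenframe, using the same commutator $[A_t^{\star_{h_t}},\Gamma_t]$ with off-diagonal entries $-2\gamma_t\bar c$ and $2\gamma_t\bar b$. Both then obtain \eqref{eq:traceGamma} from the fact that $D_{h_t}''D_{h_t}'\Gamma_t$ is trace-free. The paper justifies that via $\tr\Gamma_t=0$; your alternative, that the right-hand side is a sum of commutators, works equally well.
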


\begin{proof}
In the chosen $h_t$-unitary eigenframe,
\[
A_t^{\star_{h_t}}=
\begin{pmatrix}
\bar a&\bar c\\
\bar b&-\bar a
\end{pmatrix}.
\]
A direct calculation gives
\[
[A_t,A_t^{\star_{h_t}}]_{11}=|b|^2-|c|^2.
\]
Moreover,
\[
[A_t^{\star_{h_t}},\Gamma_t]=
\begin{pmatrix}
0&-2\gamma_t\bar c\\
2\gamma_t\bar b&0
\end{pmatrix},
\]
and therefore
\[
[A_t,[A_t^{\star_{h_t}},\Gamma_t]]_{11}=2\gamma_t(|b|^2+|c|^2).
\]
Thus
\[
\bigl(2[A_t,A_t^{\star_{h_t}}]+[A_t,[A_t^{\star_{h_t}},\Gamma_t]]\bigr)_{11}
=2\bigl((1+\gamma_t)|b|^2+(\gamma_t-1)|c|^2\bigr).
\]
Taking the $(1,1)$ entry of \eqref{eq:linearized} yields \eqref{eq:11entry}.

The matrix $D_{h_t}''D_{h_t}'\Gamma_t$ is trace-free because $\Gamma_t$ is trace-free. Therefore its $(2,2)$ entry is the negative of its $(1,1)$ entry, and
\[
\tr(\Gamma_t D_{h_t}''D_{h_t}'\Gamma_t)=2\gamma_t(D_{h_t}''D_{h_t}'\Gamma_t)_{11},
\]
which gives \eqref{eq:traceGamma}.
\end{proof}

Combining the previous two lemmas gives the scalar identity that drives the proof.

\begin{proposition}[Rank-two eigenvalue equation]\label{prop:eigenvalueequation}
On the open set $\{\gamma_t>0\}$,
\begin{equation}\label{eq:eigenvalueequation}
\partial_{\bar z}\partial_z\gamma_t
=\frac{|D_{h_t}'\Gamma_t|^2-2|\partial_z\gamma_t|^2}{2\gamma_t}
+2(1+\gamma_t)|b|^2
+2(\gamma_t-1)|c|^2.
\end{equation}
In particular,
\begin{equation}\label{eq:eigenvalueineq}
\partial_{\bar z}\partial_z\gamma_t
\ge 2(1+\gamma_t)|b|^2+2(\gamma_t-1)|c|^2.
\end{equation}
\end{proposition}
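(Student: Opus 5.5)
The identity \eqref{eq:eigenvalueequation} is obtained by substituting the algebraic computation of Lemma~\ref{lem:rank2algebra} into the Bochner formula of Lemma~\ref{lem:bochnerbasic} and solving for $\partial_{\bar z}\partial_z\gamma_t$. The inequality \eqref{eq:eigenvalueineq} then follows from the eigenvalue inequality \eqref{eq:eigenineq}. All quantities involved are well defined and smooth on $U=\{\gamma_t>0\}$, as noted before Lemma~\ref{lem:bochnerbasic}. The frame-dependent quantities $|b|^2$ and $|c|^2$ are invariant under the phase changes of the unitary eigenframe, so a pointwise computation at an arbitrary point of $U$ suffices.

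Concretely, I fix $x\in U$ and choose an $h_t$-unitary frame at $x$ that is normal for the Chern connection and diagonalizes $\Gamma_t(x)$ as in \eqref{eq:Gammadiag}. I write $A_t(x)$ in the form \eqref{eq:Amatrix}. At $x$ both lemmas apply simultaneously. Substituting \eqref{eq:traceGamma} into the right-hand side of \eqref{eq:basicbochner} gives
\[
2|\partial_z\gamma_t|^2+2\gamma_t\partial_{\bar z}\partial_z\gamma_t
=|D_{h_t}'\Gamma_t|^2+4\gamma_t\bigl((1+\gamma_t)|b|^2+(\gamma_t-1)|c|^2\bigr).
\]
Moving $2|\partial_z\gamma_t|^2$ to the right and dividing by $2\gamma_t>0$ yields exactly \eqref{eq:eigenvalueequation} at $x$. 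Since $x\in U$ was arbitrary, the identity holds on all of $U$.

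For \eqref{eq:eigenvalueineq}, \eqref{eq:eigenineq} states that the numerator $|D_{h_t}'\Gamma_t|^2-2|\partial_z\gamma_t|^2$ is nonnegative. Because $\gamma_t>0$ on $U$, the first term of \eqref{eq:eigenvalueequation} is therefore nonnegative, and dropping it gives the inequality.

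The only point requiring care is compatibility of frames. Lemma~\ref{lem:bochnerbasic} uses a normal frame, so that $D''_{h_t}\Gamma_t=(D'_{h_t}\Gamma_t)^{\star}$ and the second derivatives are computed correctly. Lemma~\ref{lem:rank2algebra} uses only the pointwise values of $A_t$ and $\Gamma_t$ in a unitary eigenbasis. Both are valid at the same point in a single frame that is normal and whose value at $x$ is a unitary eigenbasis, and such a frame exists by applying a constant unitary change to any normal frame. In addition, $\partial_{\bar z}\partial_z\gamma_t$ and the other scalar quantities are frame-independent. Hence the combination is legitimate.
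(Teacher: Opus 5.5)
Your proposal is correct and is essentially the paper's own proof. You substitute \eqref{eq:traceGamma} into \eqref{eq:basicbochner} and solve for $\partial_{\bar z}\partial_z\gamma_t$; dividing \eqref{eq:basicbochner} by $2\gamma_t$ is the same as the paper's division by $4\gamma_t$ of the doubled identity for $\partial_{\bar z}\partial_z\tr(\Gamma_t^2)$. You then drop the first term using \eqref{eq:eigenineq} and $\gamma_t>0$, and your frame-compatibility remarks are accurate.
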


\begin{proof}
Substitute \eqref{eq:traceGamma} into \eqref{eq:basicbochner} and divide by $4\gamma_t$. The inequality follows from \eqref{eq:eigenineq}.
\end{proof}

\section{Proof of Theorem~\ref{thm:mainSL2}}\label{sec:maximum-principle}

\subsection{The bound \texorpdfstring{$\|\Gamma_t\|_{\op}\le1$} {on the metric variation}}\label{subsec:metric-variation-bound}

\begin{proof}[Proof of Proposition \ref{prop:Gammabound}]
Let $\Omega:=\{x\in X:\gamma_t(x)>1\}.$ On $\Omega$, Equation \eqref{eq:eigenvalueineq} gives
\[
\partial_{\bar z}\partial_z\gamma_t\ge0,
\]
because both coefficients $1+\gamma_t$ and $\gamma_t-1$ are positive. Hence $\gamma_t$ is subharmonic on every connected component of $\Omega$.

Suppose first that a connected component $\Omega_0$ is a proper subset of $X$. Since $X$ is compact, $\overline{\Omega_0}$ is compact. By continuity,
\[
\gamma_t=1\quad\text{on }\partial\Omega_0,
\qquad
\gamma_t>1\quad\text{in }\Omega_0.
\]
Thus the maximum of $\gamma_t$ on $\overline{\Omega_0}$ is strictly larger than $1$ and is attained at an interior point. The strong maximum principle contradicts subharmonicity unless $\gamma_t$ is constant on $\Omega_0$; but a constant value $>1$ cannot extend continuously to the boundary value $1$. Hence no proper component exists.

If $\Omega$ is non-empty, the only remaining possibility is $\Omega=X$. Then $\gamma_t$ is globally subharmonic on the compact surface $X$, so $\gamma_t$ is constant. Equation \eqref{eq:eigenvalueequation} is now a sum of nonnegative terms, because $\gamma_t>1$. Therefore
\[
b=c=0,
\qquad
D_{h_t}'\Gamma_t=0.
\]
Since $\Gamma_t$ is self-adjoint, $D_{h_t}''\Gamma_t=(D_{h_t}'\Gamma_t)^{\star_{h_t}}=0$ as well. Thus $\Gamma_t$ is $D_{h_t}$-parallel. Its two distinct eigenvalues define a $D_{h_t}$-parallel orthogonal splitting
\[
E=L_+\oplus L_-.
\]
In particular, $L_\pm$ are holomorphic line subbundles. The identities $b=c=0$ say that $\Phi_t$ preserves both line subbundles; equivalently, since $t>0$, $\Phi$ preserves them. Moreover $[A_t,A_t^{\star_{h_t}}]=0$, so Hitchin's equation gives $F_{h_t}=0$. Hence each $L_\pm$ has degree zero. This contradicts stability.

Therefore $\Omega=\varnothing$, i.e., we have $\gamma_t\le1$ everywhere. Since the eigenvalues of $\Gamma_t$ are $\gamma_t$ and $-\gamma_t$, this is exactly $\|\Gamma_t\|_{\op}\le1.$
\end{proof}

Combining Proposition \ref{prop:Gammabound} with Proposition \ref{prop:energyderivative} proves the non-strict part of Theorem \ref{thm:mainSL2}.

\subsection{The equality case and strict monotonicity}\label{sec:equality}

The estimate $\gamma_t\le1$ is sharp. The boundary case $\gamma_t=1$ corresponds precisely to the Hodge-theoretic fixed-point phenomenon.

\begin{proposition}\label{prop:equality}
Assume $(E,\Phi)$ is stable and that $\gamma_t(x_0)=1$ at some point $x_0\in X$. Then $\gamma_t\equiv1$ and $(E,\Phi)$ is a $\C^*$-fixed Higgs bundle.
\end{proposition}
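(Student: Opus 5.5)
Since $\gamma_t\le 1$ everywhere by Proposition~\ref{prop:Gammabound}, the point $x_0$ is an interior maximum of $\gamma_t$. Near $x_0$ we have $\gamma_t>0$, so $\gamma_t$ is smooth there and Proposition~\ref{prop:eigenvalueequation} applies. The plan is to show that $\{\gamma_t=1\}$ is open and closed, and then read off a parallel grading from the equality case.

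\textbf{Step 1 (propagation of $\gamma_t=1$).} Set $u:=1-\gamma_t\ge0$, which vanishes at $x_0$. Inequality \eqref{eq:eigenvalueineq} gives
\[
\partial_{\bar z}\partial_z\gamma_t\ge 2(1+\gamma_t)|b|^2-2(1-\gamma_t)|c|^2\ge -2|c|^2(1-\gamma_t).
\]
Hence $\partial_{\bar z}\partial_z u\le 2|c|^2u$, where the coefficient $2|c|^2$ is bounded on compact subsets of $\{\gamma_t>0\}$. So $u$ is a nonnegative supersolution of $Lu:=\Delta u-Vu\le0$ with $V\ge0$ bounded, and it attains the value $0$ at an interior point. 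The strong maximum principle (Hopf form, in which the zero-order term has the favourable sign) forces $u\equiv0$ near $x_0$. Thus $\{\gamma_t=1\}$ is open. It is closed by continuity, so $\gamma_t\equiv1$ on the connected surface $X$. I expect this step to be the main point needing care: one must make sure that $|c|^2$ is a well-defined bounded function near $x_0$. This holds because the eigenframe is smooth on $\{\gamma_t>0\}$ and $|c|^2\le\varepsilon_t$.

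\textbf{Step 2 (equality in \eqref{eq:eigenvalueequation}).} With $\gamma_t\equiv1$ we have $\partial_z\gamma_t=0$, and \eqref{eq:eigenvalueequation} reads
\[
0=\tfrac12|D_{h_t}'\Gamma_t|^2+4|b|^2.
\]
Hence $b\equiv0$ and $D_{h_t}'\Gamma_t=0$. Then $D_{h_t}''\Gamma_t=(D_{h_t}'\Gamma_t)^{\star_{h_t}}=0$, so $\Gamma_t$ is parallel with eigenvalues $\pm1$. It therefore gives an $h_t$-orthogonal, $D_{h_t}$-parallel and hence holomorphic splitting $E=L_+\oplus L_-$. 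In this splitting $b=0$ means $\Phi$ maps $L_+\to L_+\otimes K$ trivially off-diagonal in one direction; combined with the tracelessness of $\Phi$, the remaining diagonal part $a$ must also be analysed.

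\textbf{Step 3 (killing the diagonal part and concluding).} Take the $(1,1)$ entry of the linearized equation, \eqref{eq:11entry}, with $D''D'\Gamma_t=0$ and $\gamma_t=1$: it gives $0=4|b|^2$, which carries no information about $a$. Instead I would use Proposition~\ref{prop:energyderivative} together with the $L_\pm$-splitting being holomorphic and parallel. The operator $\Gamma_t$ is a holomorphic endomorphism satisfying $[\Gamma_t,\Phi]=2\cdot(\text{off-diagonal part})$. In particular, $\Phi=\Phi_0+\Phi_{c}$ with $\Phi_0$ diagonal, $\Phi_c\in H^0(\mathrm{Hom}(L_+,L_-)\otimes K)$, and $\Phi_0=\mathrm{diag}(\alpha,-\alpha)$ with $\alpha$ a holomorphic differential. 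Then consider the gauge $g_\lambda=\mathrm{diag}(\lambda^{1/2},\lambda^{-1/2})$, which maps $(E,\lambda\Phi)$ to $(E,\lambda\Phi_0+\Phi_c)$. The derivative of the family $h_t$ is exactly the flow generated by $\Gamma_t=\mathrm{diag}(1,-1)$. This should mean $h_{t}$ is obtained from $h_1$ by this diagonal gauge, which forces the diagonal part to transform trivially, hence $\alpha\equiv0$. Concretely, one compares $\partial_s$ of the Hitchin equation restricted to the diagonal block, where $\Gamma_t$ commutes with $\Phi_0$, and obtains $2[\Phi_0,\Phi_0^\star]$-type terms that must vanish together with $|\alpha|^2$-contributions. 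Once $\alpha=0$, $\Phi$ is strictly lower triangular with respect to $L_+\oplus L_-$, so $(E,\Phi)\cong(E,\lambda\Phi)$ via $g_\lambda$, and $(E,\Phi)$ is a $\C^*$-fixed Hodge bundle.
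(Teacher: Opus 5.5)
Your Steps 1 and 2 are correct and agree with the paper. Your open--closed argument for $\gamma_t\equiv1$ is a cleaner phrasing of the paper's remark that the relevant component of $\{\gamma_t>0\}$ cannot have boundary.

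The gap is in Step 3, where you must kill the diagonal part $a$ but never identify an equation that does so. The missing idea is to read off the \emph{off-diagonal} entry of the linearized equation \eqref{eq:linearized}. Since $D_{h_t}\Gamma_t=0$, its left side vanishes. With $\gamma_t=1$ and $b=0$, a direct multiplication shows that the $(2,1)$ entry of $2[A_t,A_t^{\star_{h_t}}]+[A_t,[A_t^{\star_{h_t}},\Gamma_t]]$ is $4\bar a c$, so $\bar a c\equiv0$. Stability then enters. If $c\equiv0$, then $\Phi$ is diagonal and both $L_\pm$ are $\Phi$-invariant with $\deg L_++\deg L_-=0$, which is impossible. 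Hence $c\not\equiv0$, and $c$ vanishes only at isolated points, since it comes from a holomorphic section of $\mathrm{Hom}(L_+,L_-)\otimes K_X$. So $a=0$ on a dense open set, and therefore $a\equiv0$.

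What you propose instead does not work, for three reasons.
\begin{itemize}
\item The diagonal block carries no information about $\alpha$: $[\Phi_0,\Phi_0^{\star_{h_t}}]=0$ and $\Gamma_t$ commutes with $\Phi_0$. Your own $(1,1)$-entry computation already showed this.
\item The gauge heuristic is circular. You know $\Gamma_t=\mathrm{diag}(1,-1)$ only at the single time $t$, so nothing gives $h_{t'}=g_{t'/t}^*h_t$ for $t'\neq t$. In fact $g_\lambda^*h_t$ is harmonic for $t\Phi_0+\lambda t\Phi_c$, not for $\lambda t\Phi$. Comparing its linearization with the true one merely reproduces the off-diagonal relation $\bar\alpha c=0$, not $\alpha=0$.
\item Your Step 3 never uses stability, and it cannot succeed without it. Take the polystable bundle $E=M\oplus M^{-1}$ with $\deg M=0$ and $\Phi=\mathrm{diag}(\alpha,-\alpha)$, $0\neq\alpha\in H^0(K_X)$. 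The metrics $h_t=\mathrm{diag}(t\,h_M,\,t^{-1}h_M^{-1})$, with $h_M$ flat, solve Hitchin's equation with fixed determinant. Here $\Gamma_t\equiv\mathrm{diag}(1,-1)$ is parallel, $b=c=0$, and the family is even generated by the diagonal gauge. Yet $\Phi$ is not $\C^*$-fixed.
\end{itemize}
Every local identity you invoke holds in that example. So the conclusion $\alpha\equiv0$ requires the stability argument excluding $c\equiv0$, combined with $\bar a c\equiv0$ from the $(2,1)$ entry.
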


\begin{proof}
Since $\gamma_t(x_0)=1$, we work in the connected component of $\{\gamma_t>0\}$ containing $x_0$. Set
\[
u:=\gamma_t-1\le0.
\]
From \eqref{eq:eigenvalueineq},
\[
\begin{aligned}
\partial_{\bar z}\partial_z u
&\ge 2(1+\gamma_t)|b|^2+2u|c|^2,
\end{aligned}
\]
so
\begin{equation}\label{eq:uineq}
\partial_{\bar z}\partial_z u-2|c|^2u\ge2(1+\gamma_t)|b|^2\ge0.
\end{equation}
The zeroth-order coefficient in the operator on the left is nonpositive. Since $u\le0$ attains its maximum $0$ at $x_0$, the strong maximum principle gives $u\equiv0$ on that component. The component cannot have boundary in $\{\gamma_t=0\}$, so $\gamma_t\equiv1$ on $X$.

Returning to the exact identity \eqref{eq:eigenvalueequation}, we find
\[
b=0,
\qquad
D_{h_t}\Gamma_t=0.
\]
Thus the eigenspaces of $\Gamma_t$ give a global $D_{h_t}$-parallel, hence holomorphic, splitting $E=L_+\oplus L_-$. In that splitting
\[
A_t=
\begin{pmatrix}
a&0\\
c&-a
\end{pmatrix}.
\]
Because $D_{h_t}\Gamma_t=0$, the left side of the linearized equation \eqref{eq:linearized} vanishes. With $\gamma_t=1$ and $b=0$, a direct multiplication shows that the $(2,1)$ entry of
\[
2[A_t,A_t^{\star_{h_t}}]+[A_t,[A_t^{\star_{h_t}},\Gamma_t]]
\]
is $4\bar a\,c$. Hence $4\bar a\,c=0.$

If $c\equiv0$, then $\Phi$ is diagonal and both $L_+$ and $L_-$ are $\Phi$-invariant. Since $\deg L_++\deg L_-=0$, stability is impossible. Hence $c\not\equiv0$. On the nonempty open set where $c\ne0$ we have $a=0$, and holomorphicity then forces $a\equiv0$. Thus, with respect to the holomorphic splitting \(E=L_+\oplus L_-\), we have
\[
\Phi_t=
\begin{pmatrix}
0&0\\
\beta_t&0
\end{pmatrix}.
\]
By taking $t=1$, we obtain that $(E,\Phi)$ is a $\C^*$-fixed Higgs bundle.
\end{proof}

\begin{corollary}\label{cor:strict}
If $(E,\Phi)$ is not $\C^*$-fixed, then on $X$ we have $\gamma_t<1$. Consequently,
\[
\partial_t e_t(x)>0
\qquad\text{whenever }\Phi(x)\ne0.
\]
\end{corollary}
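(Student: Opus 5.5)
The corollary follows by combining Proposition~\ref{prop:Gammabound}, Proposition~\ref{prop:equality} and the energy-derivative formula of Proposition~\ref{prop:energyderivative}. First, suppose $(E,\Phi)$ is not $\C^*$-fixed and fix $t>0$. Proposition~\ref{prop:Gammabound} gives $\gamma_t\le1$ everywhere on $X$. If $\gamma_t(x_0)=1$ at some $x_0$, Proposition~\ref{prop:equality} would force $(E,\Phi)$ to be $\C^*$-fixed, a contradiction. Hence $\gamma_t<1$ at every point of $X$, for every $t>0$.

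Next, fix $x\in X$ with $\Phi(x)\ne0$. Then $A_t(x)=tA_1(x)\ne0$ for every $t>0$. In an $h_t$-unitary eigenbasis of $\Gamma_t(x)$ we write $A_t$ as in \eqref{eq:Amatrix}, and \eqref{eq:energyderivative} gives
\[
\partial_s\varepsilon_t=4|a|^2+2(1+\gamma_t)|b|^2+2(1-\gamma_t)|c|^2 .
\]
We have $0\le\gamma_t<1$, so all three coefficients $4$, $2(1+\gamma_t)$ and $2(1-\gamma_t)$ are strictly positive. Since $A_t(x)\ne0$, at least one of $a,b,c$ is nonzero, and therefore $\partial_s\varepsilon_t(x)>0$.

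Finally, $\partial_t=t^{-1}\partial_s$ with $t>0$. By \eqref{eq:energynormalization}, $e_t$ is $\varepsilon_t$ multiplied by the $t$-independent positive factor $C\lambda(x)^{-1}$. It follows that $\partial_te_t(x)>0$.

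One point needs care. At points where $\gamma_t(x)=0$ the eigenbasis is not unique, but \eqref{eq:energyderivative} is a pointwise algebraic identity valid for any unitary eigenbasis, so the argument still applies there. There is no real obstacle beyond this check; all the analytic work is already contained in Propositions~\ref{prop:Gammabound} and~\ref{prop:equality}.

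This also gives part (ii) of Theorem~\ref{thm:mainSL2}. If $\Phi(x)\ne0$ and $\partial_te_t(x)=0$, then $(E,\Phi)$ cannot fail to be $\C^*$-fixed, since the corollary would then give $\partial_te_t(x)>0$.
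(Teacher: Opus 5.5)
Your proposal is correct and follows the paper's proof. Both arguments exclude $\gamma_t=1$ via Proposition~\ref{prop:equality}, using Proposition~\ref{prop:Gammabound} for $\gamma_t\le1$ (which the paper uses only implicitly), and then read off strict positivity from the three coefficients in \eqref{eq:energyderivative}. Your explicit passage from $\partial_s\varepsilon_t$ to $\partial_t e_t$ via \eqref{eq:energynormalization}, and your remark about points where $\gamma_t=0$, are harmless details that the paper leaves tacit.
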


\begin{proof}
Proposition \ref{prop:equality} shows that $\gamma_t=1$ anywhere would force the bundle to be fixed. Hence $\gamma_t<1$. In \eqref{eq:energyderivative}, the three coefficients
\[
4,\qquad2(1+\gamma_t),\qquad2(1-\gamma_t)
\]
are then strictly positive. Thus $\partial_s\varepsilon_t(x)>0$ whenever $A_t(x)\ne0$.
\end{proof}

\begin{remark}
For a nonzero two-step Hodge fixed point, one can choose the standard grading normalization so that the logarithmic metric variation has eigenvalues $\{1,-1\}$; hence the bound in Proposition \ref{prop:Gammabound} is sharp.  
\end{remark}

\section{Failure in every rank
\texorpdfstring{$n\geq 3$}
{n >= 3}}\label{sec:highrank-counterexample}

We now prove Theorem~\ref{thm:highrank-counterexample}.  

\subsection{A family of Higgs bundles}\label{sec:familyHiggs}

Let $X$ be a compact Riemann surface of genus $g\ge2$.  Choose an odd theta
characteristic $L$, so that
\[
 L^2\simeq K_X,
 \qquad
 h^0(X,L)\equiv1\pmod2.
\]
In particular $H^0(X,L)\ne0$.  Fix a nonzero section
\[\sigma\in H^0(X,L).\]  Since $\deg L=g-1>0$, the zero divisor of $\sigma$ is nonempty;
fix a point $p$ such that $\sigma(p)=0$.  Fix an isomorphism $L^2\simeq K_X$.  Via the
induced identifications
\[
 K_XL^{-2}\simeq\mathcal O_X,
 \qquad
 K_XL^{-1}\simeq L,
\]
we may regard $\sigma$ as a section of $K_XL^{-1}$ and $\sigma^2$ as a section of
$K_X$.

For an integer $n\ge3$, define
\[
 E_n:=L^{-1}\oplus\mathcal O^{\oplus(n-2)}\oplus L.
\]
Thus $\det E_n\simeq\mathcal O$.  With respect to this ordered decomposition, for any $u\in\C$, define a Higgs field $\Psi_{n,u}$ by
\[
 (\Psi_{n,u})_{ij}
 :=
 \begin{cases}
 \sigma, & (i,j)=(1,2)\ \text{or}\ (n-1,n),\\
 \sigma^2, & j=i+1,\quad 2\le i\le n-2,\\
 u, & (i,j)=(1,n),\\
 0, & \text{otherwise},
 \end{cases}
\]
where the $(1,n)$ entry is interpreted through
$K_XL^{-2}\simeq\mathcal O_X$.  When $n=3$, the middle line involving $\sigma^2$
is absent.  Set
\[
 (E_n,\Phi_n):=(E_n,\Psi_{n,1}).
\]
The field $\Psi_{n,u}$ is trace-free, so $(E_n,\Psi_{n,u})$ is an
$\mathrm{SL}(n,\C)$ Higgs bundle.  Since $\sigma(p)=0$, all entries of
$\Psi_{n,u}(p)$ vanish except the $(1,n)$ entry $u$.  Hence
\[
 \Psi_{n,0}(p)=0,
 \qquad
 \Phi_n(p)\ne0.
\]

\begin{lemma}\label{lem:highrank-stability}
For every $u\in\C$, the Higgs bundle $(E_n,\Psi_{n,u})$ is stable.
\end{lemma}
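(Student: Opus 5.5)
The plan is to show that the only saturated $\Psi_{n,u}$-invariant subsheaves of $E_n$ are the members of the standard flag, and then to compute their degrees. Write $e_1,\dots,e_n$ for the ordered summands $L^{-1},\mathcal O,\dots,\mathcal O,L$. The matrix of $\Psi_{n,u}$ is strictly upper triangular, and its superdiagonal entries are $\sigma,\sigma^2,\dots,\sigma^2,\sigma$. The $(1,n)$ entry $u$ lies strictly above the diagonal.

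\textbf{Step 1: the flag is invariant.} Set
\[
F_k:=L^{-1}\oplus\mathcal O^{\oplus(k-1)}\quad(1\le k\le n-1),\qquad F_n:=E_n .
\]
Strict upper-triangularity means $\Psi_{n,u}(F_k)\subset F_{k-1}\otimes K_X$, so each $F_k$ is $\Psi_{n,u}$-invariant. Each $F_k$ is a direct summand of $E_n$, hence saturated. Since $\deg L=g-1>0$, its degree is
\[
\deg F_k=-(g-1)<0\qquad\text{for }1\le k\le n-1 .
\]

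\textbf{Step 2: generic regularity.} Let $X^\circ:=X\setminus\{\sigma=0\}$, a dense open set. At each point of $X^\circ$, after trivializing $K_X$, the endomorphism $\Psi_{n,u}$ is nilpotent with all superdiagonal entries nonzero. It is therefore a regular nilpotent, i.e.\ it has a single Jordan block. This holds regardless of the value of $u$, since $u$ sits strictly above the superdiagonal.

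By elementary linear algebra, the only invariant subspaces of a regular nilpotent $N$ are the kernels $\ker N^r$. In our upper-triangular form, $\ker N^r=\operatorname{span}(e_1,\dots,e_r)$, which is the fibre of $F_r$.

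\textbf{Step 3: every invariant subsheaf is a flag member.} Let $V\subset E_n$ be a proper nonzero $\Psi_{n,u}$-invariant subsheaf of rank $r$. Its saturation $\overline V$ is again invariant, and $\deg V\le\deg\overline V$, so we may assume $V$ is saturated, i.e.\ a subbundle.

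Over $X^\circ$ the fibres of $V$ are $\Psi_{n,u}$-invariant $r$-dimensional subspaces. By Step 2 they therefore coincide with the fibres of $F_r$. Two saturated subsheaves of the same rank that agree on a dense open set are equal, so $V=F_r$. Consequently $\deg V<0=\deg E_n/\operatorname{rk}E_n$, and $(E_n,\Psi_{n,u})$ is stable.

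The main point requiring care is Step 3: identifying an invariant subbundle with $F_r$ from generic data. This rests on two facts. First, a regular nilpotent has only the kernel-flag as invariant subspaces. Second, saturated subsheaves are determined by their restriction to a dense open set, because the quotient $E_n/V$ is torsion-free. Both are standard, so no real obstacle is expected. The argument is uniform in $u$, which covers the case $u=0$ needed for the limit Higgs bundle.
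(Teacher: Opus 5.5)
Your proposal is correct and follows essentially the same route as the paper. Away from the zeros of $\sigma$ the superdiagonal entries are nonzero, so $\Psi_{n,u}$ is regular nilpotent; this forces every saturated $\Psi_{n,u}$-invariant subsheaf of rank $r$ to agree generically, and hence everywhere, with $F_r=L^{-1}\oplus\mathcal O^{\oplus(r-1)}$, whose degree is $-(g-1)<0$. The only cosmetic fix is to restrict to ranks $0<r<n$ from the outset, since a rank-$n$ subsheaf saturates to $E_n$ itself.
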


\begin{proof}
At the generic point of $X$, after trivializing $K_X$, the Higgs field
$\Psi_{n,u}$ is represented by a regular nilpotent endomorphism.  Indeed,
$\Psi_{n,u}^n=0$, while the product of the adjacent arrows gives
\[
 (\Psi_{n,u}^{\,n-1})_{1n}=\sigma^{2n-4}\ne0.
\]
Thus the generic fibre consists of a single nilpotent Jordan block, whose
unique invariant subspace of dimension $r$ is $\ker\Psi_{n,u}^{\,r}$ for
$1\le r\le n-1$.

For
\[
 F_r:=L^{-1}\oplus\mathcal O^{\oplus(r-1)}\subset E_n,
 \qquad 1\le r\le n-1,
\]
the strictly upper-triangular form of $\Psi_{n,u}$ shows that $F_r$ is
$\Psi_{n,u}$-invariant and is generically equal to
$\ker\Psi_{n,u}^{\,r}$.  Let $F\subset E_n$ be a saturated
$\Psi_{n,u}$-invariant subsheaf of rank $r$.  Its generic fibre must therefore
coincide with that of $F_r$.  The induced map $F\to E_n/F_r$ vanishes
generically and hence vanishes identically because $E_n/F_r$ is torsion-free.
Thus $F\subset F_r$.  Since $F_r/F$ is torsion and injects into the
torsion-free sheaf $E_n/F$, saturation of $F$ implies $F=F_r$.

Finally,
\[
 \deg F_r=-\deg L=-(g-1),
 \qquad
 \mu(F_r)=-\frac{g-1}{r}<0=\mu(E_n).
\]
The saturation of an invariant subsheaf is again invariant and can only
increase degree, so the preceding calculation proves stability.
\end{proof}

By definition, the bundle $(E_n,\Psi_{n,0})$ is $\C^*$-fixed.  

\subsection{The scaling identity}

For $t>0$ set
\[
 g_t:=\operatorname{diag}\left(
 t^{1-\frac{n+1}{2}},
 t^{2-\frac{n+1}{2}},
 \ldots,
 t^{n-\frac{n+1}{2}}
 \right).
\]
The sum of the exponents is zero, so $\det g_t=1$.  Since $t>0$, the real
powers are unambiguous even when $n$ is even.  For every adjacent entry one has
\[
 t\,\frac{(g_t)_{ii}}{(g_t)_{i+1,i+1}}=1,
\]
whereas for the long $(1,n)$ entry,
\[
 t\,\frac{(g_t)_{11}}{(g_t)_{nn}}=t^{2-n}.
\]
Therefore
\begin{equation}\label{eq:highrank-scaling-identity}
 g_t(t\Phi_n)g_t^{-1}
 =\Psi_{n,t^{-(n-2)}}.
\end{equation}
Consequently the positive real scaling orbit $t\cdot(E_n,\Phi_n)$ converges in the Higgs-bundle moduli space to the stable fixed point
$(E_n,\Psi_{n,0})$ as $t\to+\infty$.

Let $k_{n,u}$ denote the normalized harmonic metric of
$(E_n,\Psi_{n,u})$.  By uniqueness and
\eqref{eq:highrank-scaling-identity}, if $h_{n,t}$ is the normalized harmonic
metric of $(E_n,t\Phi_n)$, then
\[
 h_{n,t}=g_t^*k_{n,t^{-(n-2)}}.
\]
Thus $g_t:(E_n,h_{n,t})\to(E_n,k_{n,t^{-(n-2)}})$ is an isometry
intertwining the Higgs fields.  Hence
\begin{equation}\label{eq:highrank-energy-invariance}
 e_{n,t}(x)
 =e_{(E_n,\Psi_{n,t^{-(n-2)}})}(x),
\end{equation}
where the energy density on the right is computed using the harmonic metric
$k_{n,t^{-(n-2)}}$.

\subsection{Smooth dependence of the harmonic metric}

\begin{lemma}\label{lem:highrank-smoothness}
The normalized harmonic metrics $k_{n,u}$ associated with
$(E_n,\Psi_{n,u})$ depend smoothly on $(\Re u,\Im u)$ for $u$ near $0$,
with smooth dependence in the spatial variable as well.
\end{lemma}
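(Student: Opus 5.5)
The plan is to obtain $k_{n,u}$ from the implicit function theorem applied to the Hitchin equation at $u=0$. The starting point is Lemma~\ref{lem:highrank-stability}: every $(E_n,\Psi_{n,u})$ is stable, and in particular the fixed point $(E_n,\Psi_{n,0})$ is. So $k_{n,0}$ exists, is unique once the determinant metric is fixed, and is smooth. Any other metric with the same determinant can be written as
\[
 k=k_{n,0}e^{\xi},
\]
where $\xi$ is a trace-free, $k_{n,0}$-self-adjoint endomorphism. I would work in the Banach spaces $W^{k,2}$ (or $C^{k,\alpha}$) of such $\xi$. On these I define
\[
 \mathcal N(u,\xi):=\Lambda\bigl(F_{k}+[\Psi_{n,u},\Psi_{n,u}^{\star_{k}}]\bigr),
 \qquad k=k_{n,0}e^{\xi},
\]
which takes values in trace-free $k$-self-adjoint endomorphisms. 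After conjugating by $e^{\xi/2}$, these can be identified with trace-free $k_{n,0}$-self-adjoint endomorphisms in a smooth way. Since $\Psi_{n,u}=\Psi_{n,0}+uN$ with $N$ fixed and holomorphic, $\mathcal N$ is real-analytic in $(\Re u,\Im u,\xi)$. It is real-analytic rather than holomorphic in $u$, because $\Psi^{\star}$ involves $\bar u$. By construction $\mathcal N(0,0)=0$.

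The key step is to show that $L:=D_\xi\mathcal N(0,0)$ is an isomorphism. As in Proposition~\ref{prop:linearized}, $L$ is, up to a sign and the factor $\Lambda$, the operator
\[
 \xi\mapsto \Delta_{k_{n,0}}\xi+\bigl[\Psi_{n,0},[\Psi_{n,0}^{\star},\xi]\bigr]^{\mathrm{sym}},
\]
a self-adjoint elliptic second-order operator of index zero. Pairing $L\xi$ with $\xi$ and integrating by parts gives
\[
 \int_X |D_{k_{n,0}}\xi|^2+\bigl|[\Psi_{n,0}^{\star},\xi]\bigr|^2 .
\]
So $\xi\in\ker L$ forces $D\xi=0$ and $[\Psi_{n,0},\xi]=0$. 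Then $\xi$ is a holomorphic, $\Psi_{n,0}$-commuting, self-adjoint trace-free endomorphism. Its eigenspaces give a parallel $\Psi_{n,0}$-invariant splitting of $E_n$. By stability, and the resulting simplicity of $(E_n,\Psi_{n,0})$, this forces $\xi$ to be a multiple of the identity, hence zero since it is trace-free. Thus $L$ is injective, and being self-adjoint Fredholm of index zero, it is surjective.

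The implicit function theorem then produces a real-analytic map $u\mapsto\xi(u)$ near $0$ with $\mathcal N(u,\xi(u))=0$. This gives harmonic metrics with the fixed determinant, and by uniqueness they coincide with $k_{n,u}$. Smoothness in the spatial variable follows from elliptic regularity. Moreover, the Sobolev-level family depends smoothly on $u$ in every $W^{k,2}$, so by Sobolev embedding it is jointly $C^\infty$ in $(x,\Re u,\Im u)$.

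I expect the main obstacle to be the kernel step, specifically being careful that the linearization of the Higgs term is exactly $[\Psi,[\Psi^\star,\xi]]$ with a positive sign, so that the integral identity above has the right signs. The second obstacle is the simplicity argument. If that needs more care, I would argue directly: $\xi$ commutes with a generically regular nilpotent, so it is generically a polynomial in $\Psi_{n,0}$. Being self-adjoint and trace-free, it must then be $0$.
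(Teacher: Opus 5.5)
Your proposal is correct and follows essentially the paper's route. The paper's proof combines stability of every $(E_n,\Psi_{n,u})$ (Lemma~\ref{lem:highrank-stability}) with uniqueness of the normalized harmonic metric and then cites the standard smooth-dependence result; your implicit-function-theorem argument (invertibility of the linearization at $k_{n,0}$ via the integration-by-parts identity and simplicity of the stable Higgs bundle, then uniqueness to identify $k_{n,0}e^{\xi(u)}$ with $k_{n,u}$) is exactly the standard proof behind that citation, and the one point you gloss, that the neighborhood may shrink as the Sobolev index grows, is handled by re-applying the implicit function theorem at each nearby $u$, which works because every $(E_n,\Psi_{n,u})$ is stable.
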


\begin{proof}
By Lemma~\ref{lem:highrank-stability}, $(E_n,\Psi_{n,u})$ is stable for every
$u$ near $0$.  After fixing the determinant metric, the corresponding
harmonic metric is unique.  The asserted parameter dependence is the standard
smooth-dependence consequence.
\end{proof}

\subsection{Energy collapse and asymptotics}

For real $u$ near $0$, the only nonzero entry of $\Psi_{n,u}(p)$ is the
$(1,n)$ entry $u$, interpreted through $K_XL^{-2}\simeq\mathcal O_X$.  Since
the energy density is quadratic in the Higgs field and $k_{n,u}$ depends
smoothly on $u$ by Lemma~\ref{lem:highrank-smoothness}, there is a smooth
positive function $a_n(u)$ near $0$ such that
\[
 e_{(E_n,\Psi_{n,u})}(p)=u^2a_n(u),
 \qquad
 a_{0,n}:=a_n(0)>0.
\]
Substituting $u=t^{-(n-2)}$ into
\eqref{eq:highrank-energy-invariance} gives the exact identity
\begin{equation}\label{eq:highrank-energy-exact}
 e_{n,t}(p)
 =t^{-2(n-2)}a_n\!\left(t^{-(n-2)}\right)
\end{equation}
for all sufficiently large $t$.  In particular,
\[
 e_{n,t}(p)\longrightarrow0,
\]
while $e_{n,1}(p)>0$ because $\Phi_n(p)\ne0$.  Hence pointwise
nondecreasing energy already fails.

Taylor expansion of $a_n$ at zero gives
\[
 a_n\!\left(t^{-(n-2)}\right)
 =a_{0,n}+O\!\left(t^{-(n-2)}\right),
\]
and therefore \eqref{eq:highrank-energy-exact} gives
\[
 e_{n,t}(p)
 =a_{0,n}t^{-2(n-2)}+O\!\left(t^{-3(n-2)}\right),
\]
which is \eqref{eq:highrank-limits}.  Differentiating
\eqref{eq:highrank-energy-exact} with respect to $t$ gives
\begin{equation}\label{eq:highrank-energy-derivative}
 \begin{aligned}
 \partial_t e_{n,t}(p)
 & =-(n-2)t^{-2(n-2)-1}\\
 &\qquad\cdot\left[
 2a_n\!\left(t^{-(n-2)}\right)
 +t^{-(n-2)}a_n'\!\left(t^{-(n-2)}\right)
 \right].
 \end{aligned}
\end{equation}
Since $a_n$ is smooth near zero,
\[
 a_n\!\left(t^{-(n-2)}\right)=a_{0,n}+O\!\left(t^{-(n-2)}\right),
 \qquad
 a_n'\!\left(t^{-(n-2)}\right)=O(1).
\]
Substituting these estimates into \eqref{eq:highrank-energy-derivative} gives
\[
 \partial_t e_{n,t}(p)
 =-2(n-2)a_{0,n}t^{-2(n-2)-1}
 +O\!\left(t^{-3(n-2)-1}\right),
\]
which is \eqref{eq:highrank-derivative-limit}.  Since $a_{0,n}>0$, it follows
that
\[
 \partial_t e_{n,t}(p)<0
\]
for all sufficiently large $t$.

Finally, $(E_n,\Phi_n)$ is not $\C^*$-fixed.  Otherwise, the
corresponding pointwise energy density would be independent of $t$. This completes
the proof of Theorem~\ref{thm:highrank-counterexample}.

\subsection{Failure of Conjecture 10.3 in \cite{LiSurvey}}\label{subsec:Li-conjecture-10-3}
Recall the conjectured estimate \eqref{eq:Liconj} of pointwise energy as $t\to0$.

Let $\Theta_n$ be the Higgs field on $E_n$ whose only nonzero entry, with
respect to the decomposition of $E_n$ above, is
\[
 (\Theta_n)_{1n}:=1,
\]
where, as before, the entry is interpreted through
$K_XL^{-2}\simeq\mathcal O_X$.  We first identify the zero-limit of the
$\C^*$-orbit of $(E_n,\Phi_n)$.  For $t>0$, set
\[
q_t:=\operatorname{diag}\left(t^{-1/2},1,\ldots,1,t^{1/2}\right).
\]
Then $q_t$ is a holomorphic automorphism of $E_n$ with determinant one, and
\[
 \bigl(q_t(t\Phi_n)q_t^{-1}\bigr)_{ij}
 =
 \begin{cases}
 t^{1/2}\sigma, & (i,j)=(1,2)\ \text{or}\ (n-1,n),\\
 t\sigma^2, & j=i+1,\quad 2\le i\le n-2,\\
 1, & (i,j)=(1,n),\\
 0, & \text{otherwise}.
 \end{cases}
\]
Thus
\begin{equation*}
 \lim_{t\to0} t\cdot[(E_n,\Phi_n)]
 =[(E_n,\Theta_n)].
\end{equation*}
The limiting Higgs bundle is polystable.  Indeed,
\[
 (E_n,\Theta_n)
 \simeq
 \left(
 L^{-1}\oplus L,
 \begin{pmatrix}0&1\\0&0\end{pmatrix}
 \right)
 \oplus(\mathcal O,0)^{\oplus(n-2)}.
\]

Let $\widehat e_n$ denote the energy density associated with
$(E_n,\Theta_n)$.  Since $(\Theta_n)_{1n}=1$, we have
$\Theta_n(p)\ne0$, and therefore
\begin{equation}\label{eq:zero-limit-positive-energy}
 \widehat e_n(p)>0.
\end{equation}
On the other hand, \eqref{eq:highrank-energy-exact} gives
\begin{equation}\label{eq:highrank-infinity-zero-energy}
 e_{n,t}(p)\longrightarrow0
 \qquad\text{as }t\to+\infty.
\end{equation}
Choose $T>0$ sufficiently large that
\[
 e_{n,T}(p)<\widehat e_n(p).
\]
For the stable Higgs bundle $(E_n,T\Phi_n)$, its zero-limit is still
$[(E_n,\Theta_n)]$. Consequently
\[
 e_{n,T}(p)
 <
 e_{(E_n,\Theta_n)}(p),
\]
which is the strict reverse of the inequality predicted by
\eqref{eq:Liconj}.


\begin{thebibliography}{99}

\bibitem{Corlette1988}
K. Corlette,
\emph{Flat $G$-bundles with canonical metrics},
J. Differential Geom. \textbf{28} (1988), no. 3, 361--382.
DOI: \href{https://doi.org/10.4310/jdg/1214442469}{\nolinkurl{10.4310/jdg/1214442469}}.

\bibitem{DaiLiCyclic}
S. Dai and Q. Li,
\emph{On cyclic Higgs bundles},
Math. Ann. \textbf{376} (2020), no. 3--4, 1225--1260.
DOI: \href{https://doi.org/10.1007/s00208-018-1779-4}{\nolinkurl{10.1007/s00208-018-1779-4}}.

\bibitem{DaiLiDomination}
S. Dai and Q. Li,
\emph{Domination results in $n$-Fuchsian fibers in the moduli space of Higgs bundles},
Proc. London Math. Soc. (3) \textbf{124} (2022), no. 4, 427--477.
DOI: \href{https://doi.org/10.1112/plms.12431}{\nolinkurl{10.1112/plms.12431}}.

\bibitem{DeroinTholozan}
B. Deroin and N. Tholozan,
\emph{Dominating surface group representations by Fuchsian ones},
Int. Math. Res. Not. IMRN (2016), no. 13, 4145--4166.
DOI: \href{https://doi.org/10.1093/imrn/rnv275}{\nolinkurl{10.1093/imrn/rnv275}}.

\bibitem{Donaldson1987}
S. K. Donaldson,
\emph{Twisted harmonic maps and the self-duality equations},
Proc. London Math. Soc. (3) \textbf{55} (1987), no. 1, 127--131.
DOI: \href{https://doi.org/10.1112/plms/s3-55.1.127}{\nolinkurl{10.1112/plms/s3-55.1.127}}.

\bibitem{Hitchin1987}
N. J. Hitchin,
\emph{The self-duality equations on a Riemann surface},
Proc. London Math. Soc. (3) \textbf{55} (1987), no. 1, 59--126.
DOI: \href{https://doi.org/10.1112/plms/s3-55.1.59}{\nolinkurl{10.1112/plms/s3-55.1.59}}.

\bibitem{Hitchin1992}
N. J. Hitchin,
\emph{Lie groups and Teichm\"uller space},
Topology \textbf{31} (1992), no. 3, 449--473.
DOI: \href{https://doi.org/10.1016/0040-9383(92)90044-I}{\nolinkurl{10.1016/0040-9383(92)90044-I}}.

\bibitem{LiHarmonic}
Q. Li,
\emph{Harmonic maps for Hitchin representations},
Geom. Funct. Anal. \textbf{29} (2019), no. 2, 539--560.
DOI: \href{https://doi.org/10.1007/s00039-019-00491-7}{\nolinkurl{10.1007/s00039-019-00491-7}}.


\bibitem{LiSurvey}
Q. Li,
\emph{An introduction to Higgs bundles via harmonic maps},
SIGMA \textbf{15} (2019), 035, 30 pp.
DOI: \href{https://doi.org/10.3842/SIGMA.2019.035}{\nolinkurl{10.3842/SIGMA.2019.035}}.

\bibitem{SagmanSmillie}
N. Sagman and P. Smillie,
\emph{Local asymptotics for Hitchin's equations and high energy harmonic maps},
Math. Ann. \textbf{394} (2026), article no. 17.
DOI: \href{https://doi.org/10.1007/s00208-026-03375-y}{\nolinkurl{10.1007/s00208-026-03375-y}}.

\bibitem{SagmanTosic}
N. Sagman and O. To\v{s}i\'c,
\emph{On Hitchin's equations for cyclic $G$-Higgs bundles},
Adv. Math. \textbf{482} (2025), part A, article no. 110599.
DOI: \href{https://doi.org/10.1016/j.aim.2025.110599}{\nolinkurl{10.1016/j.aim.2025.110599}}.


\bibitem{Simpson1992}
C. T. Simpson,
\emph{Higgs bundles and local systems},
Publ. Math. Inst. Hautes \'Etudes Sci. \textbf{75} (1992), 5--95.
DOI: \href{https://doi.org/10.1007/BF02699491}{\nolinkurl{10.1007/BF02699491}}.

\bibitem{UhlenbeckYau1986}
K. Uhlenbeck and S.-T. Yau,
\emph{On the existence of Hermitian--Yang--Mills connections in stable vector bundles},
Comm. Pure Appl. Math. \textbf{39} (1986), suppl. S1, S257--S293.
DOI: \href{https://doi.org/10.1002/cpa.3160390714}{\nolinkurl{10.1002/cpa.3160390714}}.

\end{thebibliography}
\end{document}